\definecolor{black}{rgb}{0.0, 0.0, 0.0}
\definecolor{red}{rgb}{1.0, 0.5, 0.5}
\title[   ]{Global weak solutions for  Kolmogorov-Vicsek type equations with orientational interactions}
\author[Gamba]{Irene M. Gamba}
\address[Irene M. Gamba]{\newline Department of Mathematics and ICES, \newline The University of Texas at Austin, Austin, TX 78712, USA}
\email{gamba@math.utexas.edu}
\author[Kang]{Moon-Jin Kang}
\address[Moon-Jin Kang]{\newline Department of Mathematics, \newline The University of Texas at Austin, Austin, TX 78712, USA}
\email{moonjinkang@math.utexas.edu}
\newtheorem{theorem}{Theorem}[section]
\newtheorem{lemma}{Lemma}[section]
\newtheorem{proposition}{Proposition}[section]
\newtheorem{remark}{Remark}[section]
\newcommand{\beq}{\begin{equation}}
\newcommand{\eeq}{\end{equation}}
\newcommand{\bbr}{\mathbb R}
\newcommand{\bbs}{\mathbb S}
\newcommand{\bbn}{\mathbb N}
\newcommand{\bbp} {\mathbb P}
\newcommand{\bbt} {\mathbb T}
\newcommand{\eps}{\varepsilon}
\numberwithin{figure}{section}
\def\charf {\mbox{{\text 1}\kern-.30em {\text l}}}
\begin{document}

\date{\today}

\subjclass{35Q84, 35D30} \keywords{}

\thanks{\textbf{Acknowledgment.} I. M. Gamba is supported by the NSF under grants DMS-1109625, and NSF
RNMS (KI-Net) grant DMS11-07465, and M.-J. Kang is supported by Basic Science Research Program through the National Research Foundation of Korea funded by the Ministry of Education, Science and Technology (NRF-2013R1A6A3A03020506). The support from the Institute of Computational Engineering and Sciences at
the University of Texas Austin is gratefully acknowledged.}

\begin{abstract}
We study the global existence and uniqueness of weak solutions to kinetic Kolmogorov-Vicsek models that  can be considered  a non-local non-linear Fokker-Planck type equation describing the dynamics of individuals with orientational interactions. This model is derived from the discrete Couzin-Vicsek algorithm as mean-field limit \cite{B-C-C,D-M}, which governs the interactions of stochastic agents moving with a velocity of constant magnitude, i.e. the the corresponding velocity space for these type of Kolmogorov-Vicsek models are the unit sphere. Our analysis  for $L^p$ estimates and compactness properties take advantage of the orientational interaction property meaning that the velocity space is a compact manifold. 
\end{abstract}
\maketitle \centerline{\date}


\section{Introduction}
\setcounter{equation}{0}
Recently, a variety of mathematical models capturing the emergent phenomena of self-driven agents have received extensive attention. In particular, the discrete Couzin-Vicsek algorithm (CVA) has been proposed as a model describing the interactions of agents moving with velocity of constant magnitude, and with angles measured from a reference direction (see \cite{A-H, C-K-J-R-F, G-C, Vicsek}).

This manuscript focuses on analytical issues for  the kinetic (mesoscopic) description associated to  the discrete  Couzin-Vicsek algorithm  with stochastic dynamics corresponding to Brownian motion on a sphere. More precisely, we consider  the corresponding  kinetic  Kolmogorov-Vicsek model describing stochastic particles with orientational interaction,

\begin{align}
\begin{aligned} \label{main}
&\partial_t f + \omega\cdot\nabla_x f = -\nabla_{\omega}\cdot(fF_o  ) + \mu\Delta_{\omega} f,\\
&F_o(x,\omega,t)= \nu(\omega\cdot\Omega(f)) (Id-\omega\otimes\omega)\Omega(f),\\
&\Omega(J)(x,t)=\frac{J(f)(x,t)}{|J(f)(x,t)|},\ \qquad J(f)(x,t)= \int_{U \times\bbs^{d-1}} K(|x-y|)\omega f (y,\omega, t) dy d\omega,\\
&f(x,\omega,0) = f_0(x,\omega),  \quad ~x\in U,~\omega\in\bbs^{d-1},~t>0,
\end{aligned}
\end{align}
%
%
\noindent
where $f = f(x,\omega,t)$ is the one-particle distribution function at position $x\in U$, velocity direction $\omega \in \bbs^{d-1}$ and time $t$. The spatial domain $U$ denotes either $\bbr^d$ or $\bbt^d$. The operators $\nabla_{\omega}$ and $\Delta_{\omega}$ denote the gradient and the Laplace-Beltrami operator on the sphere $\bbs^{d-1}$ respectively, and $\mu>0$ is a diffusion coefficient. The term $F_o(x,\omega,t)$ is the mean-field force  that governs the orientational interaction of self-driven particles by aligning them with the direction $\Omega(x,t)\in \bbs^{d-1}$ that depends on the flux $J(x,t)$. 

This mean-field force is also proportional to the interaction frequency $\nu$. Its reciprocal $\nu^{-1}$ represents the typical time-interval between two successive changes in the trajectory of the orientational swarm particle to accommodate the presence of other particles in the neighborhood.  The function  $K$ is an isotropic observation kernel around each particle and it is assumed to be integrable in $\bbr$. 

Following  Degond and Motsch in \cite{D-M}, the interaction frequency function $\nu$ is taken to be a positive function of $\cos\theta$, where $\theta$ is the angle between 
$\omega$ and $\Omega$. Such dependence of $\nu$ with respect to the angle $\theta$ represents  different  turning transition rates at different angles. Hence, the constitutive form of such  interaction frequency $\nu(\theta)$ is inherent to  species being modeled by orientational interactions. As in \cite{D-M}, we assume that $\nu(\theta)$ is a smooth and bounded function of its argument.

The kinetic Kolmogorov-Fokker-Planck type model with orientational interactions \eqref{main} was formally derived in \cite{D-M}  as a mean-field limit of the discrete Couzin-Vicsek algorithm (CVA) with stochastic dynamics. 
There, the authors  mainly  focused on the model \eqref{main} with the following local momentum $\tilde{J}$ instead of $J$  
\begin{align}
\begin{aligned} \label{main-0}
&\Omega(\tilde J)(x,t)=\frac{\tilde{J}(f)(x,t)}{|\tilde{J}(f)(x,t)|},\ \qquad \tilde{J}(f)(x,t)= \int_{\bbs^{d-1}} \omega f (x,\omega, t) d\omega,\\
\end{aligned}
\end{align}
where $\tilde{J}$ was derived from ${J}$ in \eqref{main} by rescaling the kernel $K$ in time and spatial variables. Such scaling describes dynamics for  solutions to \eqref{main} at large time and length scales compared with scales of the individuals. 

In the current manuscript, we focus on existence and uniqueness properties  of solutions to  both models, with $J(f)$ as defined in \eqref{main} and  with $\tilde{J}(f)$ as defined in \eqref{main-0}.

In fact, since $J$ with the kernel $K=\delta_0$(Dirac mass) is exactly $\tilde{J}$, it is enough to  show  global existence and uniqueness of weak solutions to models \eqref{main} in an appropriate space, to be specified in Section 2. These results are easily applied to  $\tilde{J}$ as  in \eqref{main-0}. 

\medskip

The classical Vicsek model have received extensive attention in the last few years concerning the rigor of mathematical studies of its mean-field land  hydrodynamic limits as well as  phase transition development. More specifically,  Bolley, Ca$\tilde{\mbox{n}}$izo and Carrillo have rigorously justified  a mean-field limit in \cite{B-C-C} when the force term acting on the particles is not normalized, i.e., $\nu\Omega(x,t)$ replaced by just $J(x,t)$ in force term $F_o$. This modification leads to the appearance of phase transitions from disordered states at low density to aligned (ordered) states at high densities. Such phase transition  problem has been studied in \cite{A-H, C-K-J-R-F, D-F-L-1, D-F-L-2, F-L, G-C}. In addition, issues on hydrodynamic descriptions of classical Vicsek model have been discussed in  \cite{D-F-L-1, D-F-L-2, D-M, D-M-2, D-Y, F}. We also refer to \cite{Bo-Ca, D-D-M, H-J-K} concerning related issues.


Up to date, there are few results on existence theory of true kinetic descriptions. Frouvelle and Liu \cite{F-L} have shown the well-posedness in the space-homogeneous case of \eqref{main-0} with the regular force field $(Id-\omega\otimes\omega)\tilde{J}$ instead of $(Id-\omega\otimes\omega)\Omega(\tilde J)$. There, they have provided the convergence rates towards equilibria by using the Onsager free energy functional and Lasalle's invariance principle, and their results have been applied in \cite{D-F-L-1}. Very recently, Figalli, Morales and the second author \cite{F-K-M} have shown the well-posedness in the space-homogeneous case of \eqref{main-0}, and the convergence of solutions towards steady states, based on the gradient flow approach (see for example \cite{F-G, J-K-O}).

 On the other hand, the authors in \cite{B-C-C} have shown existence of weak solutions for the space-inhomogeneous equation for a force field $F_o$  given by  the difference between spatial convolutions of mass and momentum with a bounded Lipschitz kernels $K$,  namely $\omega K*_{x} \rho - K*_{x} J$,  instead of   $\nu\Omega$ as considered in this manuscript. Such a choice of  force field has a regularizing effect for spatial variable compared to our case $\nu\Omega$ which deals with stronger non-linearities.\\

\smallskip

This manuscript  is mainly devoted to showing the existence and uniqueness properties of weak solutions to the kinetic Kolmogorov-Vicsek type model \eqref{main}.  A difficulty in our analysis arises from the fact that $\Omega(J)$ in the alignment force term of \eqref{main} is undefined at 
${J}(f)$ becomes $0$.  So we restrict the problem of finding  global weak solutions to \eqref{main} to a subclass of solutions with the non-zero local momentum, i.e. ${J}(f)\neq 0$. 

In the next section, we briefly present some known results for kinetic models with orientational interactions, \eqref{main} and \eqref{main-0}, which give a heuristic justification for the a priori non-zero  assumption on $J(f)$ to be stated in our main result.  Section 3 presents a priori estimates and the compactness lemma, which play crucial roles in the main proof of existence of weak solution in the next section. Section 4 deals with the construction of weak solutions to  \eqref{main} by means of first, introducing  an $\eps$-regularized problem, for an arbitrary parameter $\eps>0$ modifying the alignment force    $\Omega(J)$ uniformly bounded in $\eps$. We then solve the $\eps$-regularized problem of \eqref{main} constructing a sequence of functions $\{ f_{n,\eps}\}_{n \geq 1}$ that converges to the solution $f_\eps$. Finally we show that, in within the class of solutions satisfying ${J}(f)\geq 0$, there is a subsequence $f_{\eps_k}$ converging to $f$, solving \eqref{main}. Section 5 is devoted to the proof of the uniqueness of weak solutions in a periodic spatial domain $U=\bbt^d$ under the additional constraint ${J}(f)\geq \alpha >0$.

\section{Preliminaries and Main results}
\setcounter{equation}{0}
In this section, we briefly review how the kinetic Kolmogorov-Viscek equations,  \eqref{main} and \eqref{main-0} can be formally
derived from the discrete Couzin-Vicsek algorithm model \cite{D-M} with stochastic dynamics. Then we provide our main result and useful formulations.

\subsection{Kinetic Kolmogorov-Vicsek models}
Following \cite{D-M}, the kinetic Kolmogorov-Vicsek model considered in \eqref{main}  is derived from the classical discrete Vicsek formulation modeling  Brownian motion of the sphere $\bbs^{d-1}$  given by  the following stochastic differential equations for $1\le i \le N$,

\begin{align}
\begin{aligned} \label{SDE}
& dX_i = \omega_i dt,\\
& d\omega_i = (Id-\omega_i\otimes\omega_i) \nu(\omega_i\cdot \bar{\Omega}_i) \bar{\Omega}_i dt + \sqrt{2\mu} 
(Id-\omega_i\otimes\omega_i)\circ dB^i_t,\\
& \bar{\Omega}_i = \frac{\bar{J_i} }{ |\bar{J_i}|},\quad \bar{J_i} = \sum_{j, ~|X_j-X_i|\le R} \omega_j.
\end{aligned}
\end{align}
Here, the neighborhood of the $i$-th particle is the ball centered at $X_i\in \bbr^d$ with radius $R>0$. The velocity director $\omega_i\in \bbs^{d-1}$ of the $i$-th particle tends to be aligned with the director $\Omega_i$ of the average velocity of the neighboring particles with noise $B^i_t$ standing for $N$ independent standard Brownian motions on $\bbr^{d}$ with intensity $\sqrt{2\mu}$. Then, its projection $(Id-\omega_i\otimes\omega_i)\circ dB^i_t$ represents the contribution of a Brownian motion on the sphere $\bbs^{d-1}$, which should be understood in the Stratonovich sense.  We refer to \cite{H} for a detailed description on Brownian motions on Riemannian manifolds.
We note that  the first  term in $d\omega_i$   is the sum of smooth binary interactions with identical speeds, whereas there is no constraint on the velocity in the Cucker-Smale model \cite{C-S}. In addition the interaction frequency (weight) function $\nu(\omega_i\cdot \Omega_i)$ depends on the angle between $\omega_i$ and $\Omega_i$, parametrized by $\cos \theta_i = \omega_i\cdot \Omega_i$.

From the individual-based model \eqref{SDE}, the corresponding kinetic mean-field limit \eqref{main} was proposed in \cite{B-C-C, D-M}, as the number of particles $N$ tends to infinity. Notice that $\mu$ in \eqref{main} corresponds to the diffusive coefficient associated to the Brownian motion on the sphere  $\bbs^{d-1}$. 
 
The reduced model \eqref{main} with the modified definition of setting $J=\tilde J$ as in \eqref{main-0} was proposed in  \cite{D-M} by the following  scaling argument. Considering the system dynamics  at large times and length scales compared with those scales of individuals by the dimensionless rescaled varaibles $\tilde{x} = \eps x, \tilde{t} = \eps t$ with $\eps \ll 1$,  it makes the interactions to become local and  aligned the particle velocity into the direction of the local particle flux. This interaction term is balanced at leading order $\eps$ by the diffusion term. 
 
Notice that $\Omega(f)$ in \eqref{main} is undefined when ${J}(f)$ becomes $0$. Because of this issue, we study in this manuscript  the existence of weak solutions to \eqref{main} for the subclass of solutions with non-zero local momentum, i.e. ${J}(f)\neq 0$. As shown in \cite{D-M}, since $\omega$ is not a collisional invariant of operator $Q$, the momentum is not conserved. Thus, it is not straightforward to get $J(f)(x,t)\neq 0$ for all $(x,t)$ from imposing non-zero initial momentum, i.e. $J(f)(x,0)\neq 0$ for all $x$. Moreover, there is no canonical entropy for the type of the kinetic equations as in \eqref{main}. Due to these analytical difficulties, we  heuristically justify our constraint $J(f)\neq 0$ by observing equilibria of \eqref{main-0} in the three dimensional case, which has been studied in \cite{D-M} as follows.

 For the classification of equilibria in the  $d=3$ dimensional case, we recall the  the Fisher-von Mises distribution, given by
\[
M_{\Omega}(\omega) = \frac{1}{\int_{\bbs^{2}}\exp (\frac{\sigma(\omega\cdot \Omega)}{\mu})d\omega} \exp \Big(\frac{\sigma(\omega\cdot \Omega)}{\mu}\Big)
\]
for a given unit vector $\Omega\in \bbs^{2}$, where $\sigma$ denotes an antiderivative of $\nu$, i.e. $\frac{d\sigma}{d\tau}(\tau)=\nu(\tau)$. Since $\nu$ is positive, $\sigma$ is an increasing function and then  $M_{\Omega}$ is maximal at $\omega\cdot \Omega = 1$, that is for $\omega$ pointing in the direction of $\Omega$. Therefore, $\Omega$ plays the same role as the averaged velocity  in the classical Maxwellian equilibria for classical kinetic models of rarefied gas dynamics with velocities defined in all space. 
The diffusion constant $\mu$ corresponds to the temperature strength, which measures the spreading of the equilibrium state about the average direction $\Omega$. The present model has  a constant diffusion $\mu$ that is in contrast with the classical gas dynamics where the temperature is a thermodynamical variable whose evolution is determined by the energy balance equation.

Using the Fisher-von Mises distribution, the operator $Q$ and equilibria of \eqref{main-0} are expressed as follows.

\begin{lemma}
\emph{\cite{D-M}}
(i) The operator $Q(f)$ can be written as
\[
Q(f) = \mu \nabla_{\omega} \cdot \Big[M_{\Omega(f)} \nabla_{\omega}\Big( \frac{f}{M_{\Omega(f)}}\Big) \Big].
\]
(ii) The equilibria, i.e. solutions $f(\omega)$ satisfying Q(f) = 0, form a three dimensional manifold $\mathcal{E}$ given by
\[
\mathcal{E} =  \{ \rho M_{\Omega} (\omega) ~|~ \rho >0,~ \Omega\in\bbs^2 \},
\]
\ 
where $\rho$ is the total mass and $\Omega$ is  the flux director of $\rho M_{\Omega} (\omega)$, that is,
\begin{align*}
\begin{aligned} 
&\rho = \int_{\bbs^2} \rho M_{\Omega} (\omega) d\omega ,\quad \Omega = \frac{\tilde{J}(\rho M_{\Omega})}{\Big|\tilde{J}(\rho M_{\Omega}) \Big|},\\
&\tilde{J}(\rho M_{\Omega}):=\int_{\bbs^2} \rho M_{\Omega} (\omega) \omega d\omega = \rho c(\mu) \Omega,
\end{aligned}
\end{align*}
with 
\begin{align*}
\begin{aligned} 
c(\mu)=\frac{\int_0^{\pi}\cos\theta \exp \Big(\frac{\sigma(\cos\theta)}{\mu} \Big)\sin\theta d\theta }
{\int_0^{\pi} \exp \Big(\frac{\sigma(\cos\theta)}{\mu} \Big)\sin\theta d\theta }.
\end{aligned}
\end{align*}
\end{lemma}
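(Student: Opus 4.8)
The plan is to obtain (i) by a direct chain-rule computation on the sphere, and (ii) by a coercivity (integration-by-parts) argument followed by an axial-symmetry and self-consistency check. For (i), I would first record the elementary identity that, for a smooth scalar $g$, the spherical gradient of $\omega\mapsto g(\omega\cdot\Omega)$ is $g'(\omega\cdot\Omega)\,(Id-\omega\otimes\omega)\Omega$, since $(Id-\omega\otimes\omega)\Omega$ is precisely the orthogonal projection of the constant vector $\Omega$ onto the tangent space $T_{\omega}\bbs^{d-1}$. Applying this with $g=\sigma$ (an antiderivative of $\nu$) and recalling that $M_{\Omega}=Z^{-1}\exp(\sigma(\omega\cdot\Omega)/\mu)$ with $Z$ independent of $\omega$, one gets $\mu\,\nabla_{\omega}\log M_{\Omega(f)}=\nu(\omega\cdot\Omega(f))(Id-\omega\otimes\omega)\Omega(f)=F_o$. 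Then the product rule gives $M_{\Omega(f)}\nabla_{\omega}\!\big(f/M_{\Omega(f)}\big)=\nabla_{\omega}f-f\,\nabla_{\omega}\log M_{\Omega(f)}=\nabla_{\omega}f-\mu^{-1}fF_o$, and taking the intrinsic divergence on $\bbs^{d-1}$ reproduces $\mu\Delta_{\omega}f-\nabla_{\omega}\cdot(fF_o)$, which is $Q(f)$. The only point to keep straight here is that $\nabla_{\omega}$, $\nabla_{\omega}\cdot$ and $\Delta_{\omega}$ are the intrinsic operators on the sphere, and that $\Omega(f)$ (defined since $\tilde J(f)\neq 0$) is a fixed unit vector as far as $\omega$-differentiation is concerned.

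For (ii), given $f\ge 0$ with $\tilde J(f)\ne 0$ and $Q(f)=0$, I would multiply the divergence form from (i) by $f/M_{\Omega(f)}$ and integrate over $\bbs^2$; since the sphere is boundaryless, integration by parts yields $0=-\mu\int_{\bbs^2}M_{\Omega(f)}\big|\nabla_{\omega}(f/M_{\Omega(f)})\big|^2\,d\omega$, forcing $f/M_{\Omega(f)}$ to be a nonnegative constant, necessarily some $\rho>0$ because $\tilde J(f)\ne 0$; integrating in $\omega$ then identifies $\rho$ with the total mass. To close the description of $\mathcal E$ (in particular the converse inclusion), I would verify the self-consistency relation $\Omega(\rho M_{\Omega})=\Omega$ for every $\rho>0$, $\Omega\in\bbs^2$: writing $\tilde J(\rho M_{\Omega})=\rho\int_{\bbs^2}\omega\,M_{\Omega}\,d\omega$ in polar coordinates around the axis $\Omega$, the component orthogonal to $\Omega$ vanishes by axial symmetry and the remaining scalar equals $\rho\,c(\mu)$ with $c(\mu)$ as stated; and $c(\mu)>0$ because, after the substitution $\tau=\cos\theta$ (which turns the weight $\sin\theta\,d\theta$ into a uniform measure on $[-1,1]$), $c(\mu)$ is the correlation, against that uniform measure, of the two nondecreasing functions $\tau$ and $e^{\sigma(\tau)/\mu}$, hence strictly positive by Chebyshev's correlation inequality since $\sigma$ is strictly increasing (as $\nu>0$) while the uniform mean of $\tau$ is $0$. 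Consequently $\Omega(\rho M_{\Omega})=\Omega$, so by (i) each $\rho M_{\Omega}$ indeed satisfies $Q(\rho M_{\Omega})=0$, and $\mathcal E$ is exactly the image of the smooth injective map $(\rho,\Omega)\mapsto\rho M_{\Omega}$ on $(0,\infty)\times\bbs^2$, a three-dimensional manifold.

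I expect part (i) to be essentially bookkeeping. In (ii) the two slightly delicate points are: arranging the implication $Q(f)=0\Rightarrow f=\rho M_{\Omega(f)}$ so that it is not circular — one fixes $f$, reads off the unit vector $\Omega(f)$ from $\tilde J(f)$, and only then invokes the identity of (i) — and the self-consistency $\Omega(\rho M_{\Omega})=\Omega$, whose only genuinely non-formal ingredient is the strict positivity $c(\mu)>0$. The rest (reduction to polar coordinates, matching the displayed formula for $c(\mu)$, smoothness and injectivity of $(\rho,\Omega)\mapsto\rho M_{\Omega}$) is routine.
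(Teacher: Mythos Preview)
The paper does not prove this lemma; it is quoted from \cite{D-M} and stated without proof, so there is nothing in the present paper to compare your argument against. That said, your proposal is correct and is essentially the standard derivation: part (i) reduces to the chain-rule identity $\mu\,\nabla_{\omega}\log M_{\Omega(f)}=\nu(\omega\cdot\Omega(f))\,\bbp_{\omega^{\perp}}\Omega(f)=F_o$ (which is exactly formula \eqref{formula-2} applied to $g=\sigma$), and part (ii) is the usual dissipation/coercivity argument on the closed manifold $\bbs^2$ followed by the axial-symmetry computation of $\tilde J(\rho M_{\Omega})$. Your use of Chebyshev's correlation inequality to secure $c(\mu)>0$ is a clean way to dispatch the only non-formal point; the remaining steps (polar reduction, injectivity of $(\rho,\Omega)\mapsto\rho M_{\Omega}$) are routine as you say.
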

We note that $c(\mu)\rightarrow 1$ as $\mu\rightarrow 0$, and $c(\mu)\rightarrow 0$ as $\mu\rightarrow \infty$. This means that the local momentum $\tilde{J}(\rho M_{\Omega})$ of the equilibrium solution $f=\rho M_{\Omega}$ is not zero as long as the diffusion strength $\mu$ is not sufficiently large compared to orientational interaction. Consequently, it is expected that moderate values of $\mu$ would yield non-zero local momentum $\tilde{J}(f)$ for solutions $f$ near the Von Mises equilibria.


\subsection{Main result}
We state now the main results for global existence of weak solutions to equations \eqref{main}.\\ 
We first introduce the following notations for simplification.\\
 
$\bullet$ {\bf Notation} : We denote by $D:=U\times\bbs^{d-1}$, and  by $\bbp_{\omega^{\perp}}:=Id-\omega\otimes\omega$, as 
the mapping $v\mapsto (Id-\omega\otimes\omega)v$ is the projection of the vector $v$ onto the normal plane to $\omega$.\\

$\bullet$ {\bf Hypotheses ($\mathcal{H}$)} :
As stated earlier, we assume that $\nu(\cdot)$ is a smooth and bounded function of its argument and $K(|\cdot|)\in L^{1}(U)$.
Moreover, in order to avoid   $\Omega(f)$ to be undefined, we impose a priori assumptions stating that 
  the weak solutions $f$ of \eqref{main} belong to an admissible class
\beq\label{assume}
\mathcal{A}:=\{f~|~ {J}(f)(x,t)\neq 0,\quad \forall x\in U,~t > 0 \}.
\eeq

\begin{theorem}[\bf Existence for spatial domains $ U$, being  either $\bbr^d$ or $\bbt^d$] \label{thm-exist}
Assume $(\mathcal{H})$ and $f_0$ satisfies
\beq\label{initial-con}
f_0\in (L^1\cap L^{\infty})(D)\quad \mbox{and}\quad f_0\ge 0.
\eeq
Then, for a given $T>0$, the equation \eqref{main} has a weak solution $f$, which satisfies 
\begin{align}
\begin{aligned}\label{regularity}
& f\ge 0,\\
&f\in C(0,T;L^1(D))\cap L^{\infty}(D\times (0,T)),\\
&\nabla_{\omega} f \in L^{2} (D\times (0,T)).
\end{aligned}
\end{align}
and the following weak formulation: for any $\phi\in C^{\infty}_c (D\times [0,T))$,
\begin{align}
\begin{aligned}\label{weak-form}
&\int_0^t\int_{D}f \partial_t\phi +f\omega\cdot\nabla_x \phi  + fF_o\cdot\nabla_{\omega} \phi - \mu\nabla_{\omega} f \cdot\nabla_{\omega}\phi dxd\omega ds\\
&\hspace{3cm} +\int_{D} f_0 \phi(0,\cdot) dxd\omega = 0,\\
&F_o(x,\omega,t)= \nu(\omega\cdot\Omega(f)) \bbp_{\omega^{\perp}}\Omega(f).
\end{aligned}
\end{align}
Moreover, the weak solution $f$ satisfies the following estimate
\beq\label{p-est-0}
\|f\|_{L^{\infty}(0,T; L^p(D))} +\frac{2\mu (p-1)}{p} \|\nabla_{\omega}f^{\frac{p}{2}}\|_{L^{2}(D\times (0,T))}^{\frac{2}{p}} \le e^{CT\frac{p}{p-1}}\|f_0\|_{L^p(D)},
\eeq
 for any $1\le p<\infty$, and 
\beq\label{bound-est-0}
\|f\|_{L^{\infty}(D\times (0,T))} \le e^{CT}\|f_0\|_{L^{\infty}(D)}.
\eeq
\end{theorem}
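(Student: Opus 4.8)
\emph{Plan of proof.} My plan is to desingularize the normalization $\Omega(J)=J/|J|$, to solve the resulting $\eps$-regularized model by a linearization/iteration scheme, to derive $L^p$ and $L^\infty$ bounds uniform in $\eps$, and then to remove the regularization using a compactness (velocity-averaging) argument. First I would fix $\eps>0$ and set $\Omega_\eps(J):=J/\sqrt{|J|^2+\eps^2}$, so that $|\Omega_\eps(J)|\le1$, $\Omega_\eps$ is smooth and globally $\eps^{-1}$-Lipschitz, and the regularized force $F_o^\eps(x,\omega,t)=\nu\big(\omega\cdot\Omega_\eps(J(f))\big)\,\bbp_{\omega^\perp}\Omega_\eps(J(f))$ is bounded uniformly in $\eps$. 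Then, starting from $f_{0,\eps}:=f_0$, I would define $f_{n+1,\eps}$ as the solution of the \emph{linear} kinetic Fokker-Planck equation
\begin{align*}
\partial_t f_{n+1,\eps}+\omega\cdot\nabla_x f_{n+1,\eps}
&=-\nabla_{\omega}\!\cdot\!\big(f_{n+1,\eps}\,F_o^\eps[f_{n,\eps}]\big)+\mu\Delta_{\omega}f_{n+1,\eps},\\
f_{n+1,\eps}|_{t=0}&=f_0 ,
\end{align*}
whose solvability, non-negativity (maximum principle, since $f_0\ge0$), conservation of mass, and $L^p$, $L^\infty$ control follow from standard linear parabolic/hypoelliptic theory once the drift $F_o^\eps[f_{n,\eps}]$ is known to be a bounded field with bounded $\omega$-divergence.

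\emph{A priori estimates, uniform in $n$ and $\eps$.} Multiplying the $n$-th equation by $pf^{p-1}$ and integrating over $D$: the transport term vanishes (periodicity on $\bbt^d$, decay on $\bbr^d$), the diffusion term yields $-\tfrac{4\mu(p-1)}{p}\|\nabla_\omega f^{p/2}\|_{L^2(D)}^2$, and the force term equals $-(p-1)\int_D f^p\,\nabla_\omega\!\cdot\!F_o^\eps$. Since $\bbp_{\omega^\perp}\Omega_\eps=\nabla_\omega(\omega\cdot\Omega_\eps)$ and $\Delta_\omega(\omega\cdot\Omega_\eps)=-(d-1)\,\omega\cdot\Omega_\eps$, the boundedness of $\nu,\nu'$ together with $|\Omega_\eps|\le1$ gives $|\nabla_\omega\!\cdot\!F_o^\eps|\le C$ uniformly in $\eps,n$, and a Gronwall argument produces \eqref{p-est-0}. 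Rewriting the equation in drift form $\partial_t f+\omega\cdot\nabla_x f+F_o^\eps\!\cdot\!\nabla_\omega f+(\nabla_\omega\!\cdot\!F_o^\eps)f=\mu\Delta_\omega f$ and comparing with the supersolution $e^{Ct}\|f_0\|_{L^\infty}$ gives \eqref{bound-est-0}. Consequently $\{f_{n,\eps}\}$ is bounded in $L^\infty(0,T;(L^1\cap L^\infty)(D))$ with $\nabla_\omega f_{n,\eps}$ bounded in $L^2(D\times(0,T))$, uniformly in $n$ and $\eps$.

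\emph{Passage to the limit.} For fixed $\eps$, the difference $h_{n+1}:=f_{n+1,\eps}-f_{n,\eps}$ (with $h_{n+1}|_{t=0}=0$) solves a linear equation with source $\nabla_\omega\!\cdot\!\big(f_{n,\eps}\,(F_o^\eps[f_{n,\eps}]-F_o^\eps[f_{n-1,\eps}])\big)$; I would estimate it in $L^2(D)$ via Young's convolution inequality ($K\in L^1$), Cauchy-Schwarz in $\omega$ on the compact sphere, the $\eps^{-1}$-Lipschitz bound on $\Omega_\eps$ and the uniform $L^\infty$ bound, obtaining $\tfrac{d}{dt}\|h_{n+1}\|_{L^2(D)}^2+\mu\|\nabla_\omega h_{n+1}\|_{L^2(D)}^2\le C_\eps\big(\|h_{n+1}\|_{L^2(D)}^2+\|h_n\|_{L^2(D)}^2\big)$, which makes the iteration a contraction on a short interval and extends to $[0,T]$ by the a priori bounds; the limit $f_\eps$ solves the $\eps$-regularized equation and inherits \eqref{p-est-0}--\eqref{bound-est-0}. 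Letting $\eps\to0$, up to subsequences $f_\eps\stackrel{*}{\rightharpoonup}f$ in $L^\infty(D\times(0,T))$ and $f_\eps\rightharpoonup f$ in $L^2(0,T;H^1_\omega(D))$. Since $f_\eps$ solves a kinetic transport equation whose right-hand side is an $\omega$-derivative bounded uniformly in $L^2(0,T;H^{-1}_\omega)$, the compactness lemma of Section~3 (velocity averaging on the compact manifold $\bbs^{d-1}$) makes the moment $\int_{\bbs^{d-1}}\omega f_\eps\,d\omega$ relatively compact in $L^2_{loc}$, so that $J(f_\eps)=K*_x\int_{\bbs^{d-1}}\omega f_\eps\,d\omega\to J(f)$ strongly in $L^2((0,T)\times U)$ and a.e. Here the admissibility $f\in\mathcal A$ enters decisively: $J(f)\neq0$ a.e. forces $\Omega_\eps(J(f_\eps))\to J(f)/|J(f)|=\Omega(f)$ a.e. and boundedly, hence $F_o^\eps[f_\eps]\to F_o[f]$ a.e. and boundedly, so $f_\eps F_o^\eps[f_\eps]\rightharpoonup fF_o[f]$ by weak-strong convergence, while the remaining linear terms pass to the limit by weak convergence; this yields \eqref{weak-form}. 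The estimates \eqref{p-est-0}--\eqref{bound-est-0} survive by weak lower semicontinuity, $f\ge0$ by a.e. convergence, and $f\in C(0,T;L^1(D))$ follows from weak time-continuity (via $\partial_t f$ bounded in a negative-order space) together with constancy of $\|f(t)\|_{L^1(D)}$ and non-negativity.

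\emph{Main obstacle.} The crux is that the flow is transport-dominated with no $x$-smoothing, while the nonlinearity factors through the singular map $J\mapsto J/|J|$: one cannot obtain strong compactness of $f_\eps$ in $x$ directly from the equation, so the strong convergence of the flux $J(f_\eps)$ must be extracted from a velocity-averaging lemma tailored to the compact velocity space $\bbs^{d-1}$ and to source terms in $\omega$-divergence form --- this is the technical heart of the argument (Section~3). Correspondingly, passing the limit inside $J/|J|$ is legitimate only within the admissible class, so the non-vanishing of the flux imposed in $(\mathcal H)$ is precisely what turns the limit of regularized solutions into a genuine weak solution of \eqref{main} rather than of the regularized model; the contraction estimate for the $\eps$-problem, though elementary, also requires some care because the drift depends on $f$ through the nonlocal, bilinear-type map $f\mapsto F_o^\eps[f]$.
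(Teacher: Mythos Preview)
Your overall architecture---desingularize $\Omega$ by an $\eps$-parameter, solve the regularized problem by iteration, derive uniform $L^p/L^\infty$ bounds, then pass to the limit first in $n$ and then in $\eps$---matches the paper's exactly. The differences lie in the compactness mechanism, and your guess about what Section~3 contains is off.

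For the iteration $n\to\infty$ you argue by contraction in $L^2$, exploiting that $\Omega_\eps$ is $\eps^{-1}$-Lipschitz for fixed $\eps$; this is correct and arguably tidier than what the paper does, which is to apply its compactness lemma (Lemma~3.2) directly to the sequence $(f_{\eps,n})$.

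For $\eps\to 0$ you invoke velocity averaging on $\bbs^{d-1}$. The paper deliberately avoids this: Lemma~3.2 is \emph{not} a velocity-averaging result but an energy argument. Given $(F_n)$ bounded in $L^\infty(U\times(0,T))$, one extracts a weak-$*$ limit $F$, lets $f$ solve the \emph{linear} equation \eqref{main-compact2} with force $F$, and estimates $\|f_n-f\|_{L^p}^p$ directly from the equation for the difference. The cross terms produced by $F_n-F$ can be rewritten as $\int_0^T\!\int_D \Phi\cdot(F_n-F)$ with a weight $\Phi\in L^1(D\times(0,T))$ uniformly in $n$; these vanish by weak-$*$ convergence, which forces $f_n\to f$ strongly in $L^p$ and $\nabla_\omega f_n\to\nabla_\omega f$ in $L^2$, whence $J_n\to J$ in $L^p$ by Young's inequality. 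The same lemma handles both limits $n\to\infty$ and $\eps\to 0$. The paper even remarks that velocity averaging would be the natural tool only if the velocity space were unbounded; here the compactness of $\bbs^{d-1}$ and the boundedness of the drift make the elementary energy route available.

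For the identification of the nonlinear limit as $\eps\to 0$, the paper upgrades a.e.\ convergence of $J_{\eps_k}$ to uniform convergence on large subsets of $\{|J|>\delta\}\cap B_R$ via Egorov's theorem, and then identifies the weak-$*$ limit of $f_{\eps_k}\nu(\cdot)\Omega_{\eps_k}$ there; this reaches the same endpoint as your a.e.-plus-bounded-convergence argument.

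Your route is viable provided you supply (or cite) a velocity-averaging lemma on $\bbs^{d-1}$ with right-hand side in $\omega$-divergence form; the paper's energy method trades that external input for a slightly longer but fully self-contained computation.
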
 

\begin{remark}
The proof of Theorem \ref{thm-exist} is based on energy methods, where the diffusion term $\mu\Delta_{\omega}f$ plays a crucial role. Yet  the strength $\mu>0$ does not essentially affect the proof of existence. Therefore, without loss of generality, from now on we set $\mu=1$.
\end{remark}

{We next present uniqueness  of weak solutions being constructed in Theorem \ref{thm-exist}, only for  periodic domains $U=\bbt^d$, together with the following subclass \
\[
\mathcal{A}_{\alpha}:=\{f~|~\exists~\alpha>0~s.t. ~|J(f)(x,t)|>\alpha,~\forall (x,t)\in\bbt^d\times (0,T) \},
\]
which is more restrictive than \eqref{assume}. Indeed this class corresponds to the  subclass
of weak solutions to the initial value problem \eqref{main}, with uniformly bounded below speed when solved in a spatial torus domain.  \
\

\begin{theorem}[\bf Uniqueness for periodic spatial domains  $\bbt^d$]\label{thm-unique} 
Assume $(\mathcal{H})$ and \eqref{initial-con}. Then for a given $T>0$, the periodic boundary problem of \eqref{main} has a unique weak solution $f$ in the subclass $\mathcal{A}_{\alpha}$.
\end{theorem}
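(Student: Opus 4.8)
The plan is to establish an $L^{2}$ contraction estimate for the difference of two solutions and close it with Gr\"onwall's inequality, using the uniform lower bound $|J(f)|>\alpha$ to make the nonlinear map $J\mapsto\Omega(J)=J/|J|$ Lipschitz. Suppose $f_{1},f_{2}$ are two weak solutions of \eqref{main} on $U=\bbt^{d}$ with the same initial datum $f_{0}$, both lying in $\mathcal{A}_{\alpha}$ for a common $\alpha>0$ (if the constants differ, replace $\alpha$ by the smaller one). By Theorem \ref{thm-exist} each $f_{i}$ enjoys the regularity \eqref{regularity} and the bound \eqref{bound-est-0}, so in particular $M:=\max_{i}\|f_{i}\|_{L^{\infty}(D\times(0,T))}<\infty$, and, taking $p=2$ in \eqref{p-est-0}, $g:=f_{1}-f_{2}\in L^{\infty}(0,T;L^{2}(D))$ with $\nabla_{\omega}g\in L^{2}(D\times(0,T))$ and $g|_{t=0}=0$. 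Writing
\[
f_{1}F_o[f_{1}]-f_{2}F_o[f_{2}]=gF_o[f_{1}]+f_{2}\bigl(F_o[f_{1}]-F_o[f_{2}]\bigr),
\]
$g$ solves $\partial_{t}g+\omega\cdot\nabla_{x}g=-\nabla_{\omega}\cdot\bigl(gF_o[f_{1}]+f_{2}(F_o[f_{1}]-F_o[f_{2}])\bigr)+\Delta_{\omega}g$. Testing against $g$ and using periodicity, so that $\int_{\bbt^{d}}\omega\cdot\nabla_{x}|g|^{2}\,dx=0$, formally gives
\[
\frac{1}{2}\frac{d}{dt}\|g(t)\|_{L^{2}(D)}^{2}+\|\nabla_{\omega}g(t)\|_{L^{2}(D)}^{2}=\int_{D}gF_o[f_{1}]\cdot\nabla_{\omega}g\,dxd\omega+\int_{D}f_{2}\bigl(F_o[f_{1}]-F_o[f_{2}]\bigr)\cdot\nabla_{\omega}g\,dxd\omega.
\]

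For the right-hand side I would argue as follows. Since $|\bbp_{\omega^{\perp}}\Omega(f_{1})|\le|\Omega(f_{1})|=1$ and $\nu$ is bounded, $\|F_o[f_{1}]\|_{L^{\infty}}\le\|\nu\|_{\infty}$, so the first integral is at most $\|\nu\|_{\infty}\|g\|_{L^{2}}\|\nabla_{\omega}g\|_{L^{2}}\le\frac{1}{4}\|\nabla_{\omega}g\|_{L^{2}}^{2}+\|\nu\|_{\infty}^{2}\|g\|_{L^{2}}^{2}$. For the second integral, $\Omega(f_{i})(x,t)$ is independent of $\omega$ and $\nu\in W^{1,\infty}$, so pointwise $|F_o[f_{1}]-F_o[f_{2}]|\le C(\|\nu\|_{W^{1,\infty}})\,|\Omega(f_{1})(x,t)-\Omega(f_{2})(x,t)|$; together with the elementary bound $\bigl|\tfrac{a}{|a|}-\tfrac{b}{|b|}\bigr|\le\tfrac{2|a-b|}{\max(|a|,|b|)}$ and $|J(f_{i})|>\alpha$ this yields $|\Omega(f_{1})-\Omega(f_{2})|\le\tfrac{2}{\alpha}|J(f_{1})-J(f_{2})|$. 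Since $J(f_{1})-J(f_{2})=K*_{x}\bigl(\int_{\bbs^{d-1}}\omega\,g(\cdot,\omega,t)\,d\omega\bigr)$, Young's inequality on $\bbt^{d}$ together with the compactness of $\bbs^{d-1}$ gives $\|(J(f_{1})-J(f_{2}))(\cdot,t)\|_{L^{2}_{x}}\le\|K\|_{L^{1}}\bigl\|\,\|g(\cdot,t)\|_{L^{1}_{\omega}}\bigr\|_{L^{2}_{x}}\le C\|K\|_{L^{1}}\|g(t)\|_{L^{2}(D)}$, hence $\|F_o[f_{1}]-F_o[f_{2}]\|_{L^{2}(D)}\le\tfrac{C}{\alpha}\|K\|_{L^{1}}\|g\|_{L^{2}(D)}$. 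Therefore the second integral is bounded by $\tfrac{CM}{\alpha}\|K\|_{L^{1}}\|g\|_{L^{2}}\|\nabla_{\omega}g\|_{L^{2}}\le\frac{1}{4}\|\nabla_{\omega}g\|_{L^{2}}^{2}+\tfrac{C^{2}M^{2}\|K\|_{L^{1}}^{2}}{\alpha^{2}}\|g\|_{L^{2}}^{2}$. Absorbing the dissipation terms, one obtains $\frac{d}{dt}\|g(t)\|_{L^{2}(D)}^{2}\le C_{*}\|g(t)\|_{L^{2}(D)}^{2}$ with $C_{*}=C_{*}(\alpha,M,\|K\|_{L^{1}},\|\nu\|_{W^{1,\infty}})$, and since $\|g(0)\|_{L^{2}}=0$, Gr\"onwall's lemma forces $g\equiv0$, i.e. $f_{1}=f_{2}$.

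The genuine obstacle is the rigorous justification of the energy identity, since a weak solution with the regularity \eqref{regularity} is not a priori smooth enough in $x$ and $t$ to be used as its own test function. I would resolve this by a standard DiPerna--Lions mollification argument: convolve the equation for $g$ in the $x$-variable with a mollifier $\rho_{\delta}$ on $\bbt^{d}$—which commutes with $\omega\cdot\nabla_{x}$ and with $\Delta_{\omega}$—test the regularized equation against $g*_{x}\rho_{\delta}$ (now admissible, being smooth in $x$), and pass to the limit $\delta\to0$. The only non-trivial terms are the commutators $\rho_{\delta}*_{x}(f_{i}F_o[f_{i}])-(\rho_{\delta}*_{x}f_{i})F_o[f_{i}]$ (and the analogous one with $f_{2}(F_o[f_{1}]-F_o[f_{2}])$), which tend to $0$ in $L^{2}(D\times(0,T))$ because $f_{i}F_o[f_{i}]$ and $f_{i}$ lie in $L^{\infty}(0,T;L^{2}(D))$ and multiplication by the bounded field $F_o[f_{i}]$ is continuous on $L^{2}$, so that both $\rho_{\delta}*_{x}(f_{i}F_o[f_{i}])$ and $(\rho_{\delta}*_{x}f_{i})F_o[f_{i}]$ converge strongly to $f_{i}F_o[f_{i}]$ (dominated convergence handling the time integral). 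This produces the energy identity for the mollified quantities with initial value $0$; letting $\delta\to0$, using the strong convergences $g*_{x}\rho_{\delta}\to g$ and $\nabla_{\omega}(g*_{x}\rho_{\delta})\to\nabla_{\omega}g$ in the relevant $L^{2}$ spaces together with the uniform bounds above, yields the integrated differential inequality for $\|g(t)\|_{L^{2}(D)}^{2}$ with $\|g(0)\|_{L^{2}}=0$, and Gr\"onwall's lemma then gives $g\equiv0$, proving uniqueness in $\mathcal{A}_{\alpha}$.
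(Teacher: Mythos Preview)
Your argument is correct and follows essentially the same strategy as the paper: an $L^{2}$ energy estimate for the difference of two solutions, using the lower bound $|J|>\alpha$ to make $J\mapsto J/|J|$ Lipschitz, Young's inequality for the convolution $K*_{x}$, and Gr\"onwall to conclude. The paper splits the nonlinear term into three pieces $J_{1},J_{2},J_{3}$ (separating the contributions from $\nu(\omega\cdot\Omega(f))-\nu(\omega\cdot\Omega(g))$ and from $\Omega(f)-\Omega(g)$), whereas you bundle these into a single term $f_{2}(F_{o}[f_{1}]-F_{o}[f_{2}])$ and control it directly via $|F_{o}[f_{1}]-F_{o}[f_{2}]|\le C\|\nu\|_{W^{1,\infty}}|\Omega(f_{1})-\Omega(f_{2})|$; the estimates are otherwise identical. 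One point worth noting: the paper performs the energy computation formally (``a straightforward computation yields\ldots'') without justifying that a weak solution with only the regularity \eqref{regularity} can be paired with itself, so your DiPerna--Lions mollification paragraph actually fills a gap the paper leaves implicit.
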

\begin{remark}
Our proof for uniqueness takes advantage of a uniformly positive lower bound $\alpha$ of $J(f)$ in order to control $\Omega(f)$, consequently restrict to periodic domain $\bbt^d$. Indeed, imposing that $J(f)\ge\alpha>0$ for all $x\in \bbr^d$ results in an infinite mass $\int_{\bbr^d\times\bbs^{d-1}} f dxd\omega =\infty$, due to
\begin{align*}
\begin{aligned}
\infty= \int_{\bbr^d}J(f)dx &\le  \int_{\bbr^d\times\bbs^{d-1}} |K*_{x}f| dxd\omega \\
&\le   \int_{\bbs^{d-1}} \|K*_{x}f\|_{L^1(\bbr^d)}d\omega \le  \|K\|_{L^1} \int_{\bbr^d\times\bbs^{d-1}} f dxd\omega,
\end{aligned}
\end{align*}
\end{remark}
}

\medskip

\subsection{Formulas for Calculus on sphere} 
We start recalling some useful formulas on the sphere $\bbs^{d-1}$ which are extensively used in this paper.\\
Let $F$ be a vector-valued function and $f$ be a scalar-valued function. The following formula, as a analogous of the integration by parts, holds
\beq\label{formula-0}
\int_{\bbs^{d-1}} f\nabla_{\omega}\cdot F d\omega =  -\int_{\bbs^{d-1}} F\cdot(\nabla_{\omega}f -2\omega f) d\omega.
\eeq

By the definition of the projection operator $\bbp_{\omega^{\perp}}$, it follows that
\begin{align}
\begin{aligned}\label{formula-1}
&\bbp_{\omega^{\perp}}\omega = 0,\quad \bbp_{\omega^{\perp}}\nabla_{\omega} f =\nabla_{\omega} f,\\
& \bbp_{\omega^{\perp}} u\cdot v= \bbp_{\omega^{\perp}} v\cdot u ,
\end{aligned}
\end{align}
for any scalar-valued function $f$, and vectors $u$ and $v$.\\

In addition, for any constant vector $v\in\bbr^d$, we have
\begin{align}
\begin{aligned}\label{formula-2}
&\nabla_{\omega}(\omega\cdot v) = \bbp_{\omega^{\perp}} v,\\
&\nabla_{\omega}\cdot(\bbp_{\omega^{\perp}}v) = -(d-1) \omega\cdot v.
\end{aligned}
\end{align}
These formulas can be easily derived classical calculus on spherical coordinates  (see \cite{F-L, O-T}).

\section{A priori estimates and compactness lemma}

\setcounter{equation}{0}
The following Lemma provides a priori estimates in $L^\infty(0,T; L^p(U)), 1\leq p\leq \infty$ for solutions to the initial value problem for the kinetic equation below. The subsequent 
Lemma~\ref{lem-compact} provides a  compactness  tool needed for the existence result proof of Theorem \ref{thm-exist}. 

\begin{lemma}\label{lem-priori}
Assume that $f_0$ satisfies \eqref{initial-con}, and $f$ is a smooth solution to the equation
\begin{align}
\begin{aligned} \label{main-compact}
&\partial_t f+ \omega\cdot\nabla_x f= -\nabla_{\omega}\cdot\Big( f\nu(\omega\cdot \Omega)\bbp_{\omega^{\perp}} \Omega  \Big) + \Delta_{\omega} f,\\
&f(x,\omega,0) = f_0(x,\omega),
\end{aligned}
\end{align}
where $\Omega~:~U\times\bbr_{+} \rightarrow \bbr^d$ is a bounded vector-valued function of $(x,t)$. \\
 Then, for any $1\le p<\infty$,
\beq\label{p-est}
\|f\|_{L^{\infty}(0,T; L^p(D))} +\frac{2(p-1)}{p} \|\nabla_{\omega}f^{\frac{p}{2}}\|_{L^{2}(D\times (0,T))}^{\frac{2}{p}} \le e^{CT\frac{p}{p-1}}\|f_0\|_{L^p(D)}. 
\eeq
In particular, if $p=\infty$, then 
\beq\label{bound-est}
\|f\|_{L^{\infty}(D\times (0,T))} \le e^{CT}\|f_0\|_{L^{\infty}(D)}.
\eeq
\end{lemma}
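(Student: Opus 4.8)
The plan is to derive the $L^p$ estimate by testing the equation against $p\,f^{p-1}$ (after justifying this for smooth solutions), controlling the force term, and then closing via Grönwall; the $L^\infty$ bound then follows by passing $p\to\infty$. First I would multiply \eqref{main-compact} by $p f^{p-1}$ and integrate over $D=U\times\bbs^{d-1}$. The transport term $\omega\cdot\nabla_x f$ contributes $\int_D \omega\cdot\nabla_x(f^p)\,dxd\omega = 0$ since $\omega$ is independent of $x$ and (on $U=\bbr^d$ or $\bbt^d$) there is no boundary. The diffusion term, after integration by parts on the sphere using \eqref{formula-0} with $F=\nabla_\omega f$ (so the $-2\omega f$ piece vanishes against $\nabla_\omega f$ because $\omega\cdot\nabla_\omega f=0$), gives $-p(p-1)\int_D f^{p-2}|\nabla_\omega f|^2\,dxd\omega = -\tfrac{4(p-1)}{p}\int_D |\nabla_\omega f^{p/2}|^2\,dxd\omega$. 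These are the clean terms.

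The force term is the one requiring care. Writing $F_o = \nu(\omega\cdot\Omega)\,\bbp_{\omega^\perp}\Omega$, we must estimate $-p\int_D f^{p-1}\nabla_\omega\cdot(fF_o)\,dxd\omega$. Integrating by parts via \eqref{formula-0} this equals $p\int_D f F_o\cdot(\nabla_\omega f^{p-1} - 2\omega f^{p-1})\,dxd\omega$; the term with $\omega$ pairs with $\bbp_{\omega^\perp}\Omega$, which is orthogonal to $\omega$, so it drops, leaving $p(p-1)\int_D f^{p-1}F_o\cdot\nabla_\omega f\,dxd\omega = (p-1)\int_D F_o\cdot\nabla_\omega(f^p)\,dxd\omega$. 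Now integrate by parts once more, using $\nabla_\omega\cdot(\bbp_{\omega^\perp}\Omega) = -(d-1)\omega\cdot\Omega$ from \eqref{formula-2} together with the chain rule for $\nu(\omega\cdot\Omega)$ and $\nabla_\omega(\omega\cdot\Omega)=\bbp_{\omega^\perp}\Omega$: this produces $-(p-1)\int_D f^p\big[\nu'(\omega\cdot\Omega)|\bbp_{\omega^\perp}\Omega|^2 - (d-1)\nu(\omega\cdot\Omega)\,\omega\cdot\Omega\big]\,dxd\omega$. Since $\nu$ is smooth and bounded (hence $\nu,\nu'$ bounded on $[-1,1]$), $|\Omega|$ is bounded, and $|\omega|=1$, the bracket is bounded by a constant $C$ depending only on $d$, $\|\nu\|_{C^1}$ and $\sup|\Omega|$. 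Hence the force contribution is bounded by $C(p-1)\|f\|_{L^p}^p$.

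Collecting terms yields
\[
\frac{d}{dt}\|f\|_{L^p}^p + \frac{4(p-1)}{p}\,\|\nabla_\omega f^{p/2}\|_{L^2}^2 \le C(p-1)\,\|f\|_{L^p}^p.
\]
Dropping the (nonnegative) gradient term and applying Grönwall gives $\|f(t)\|_{L^p}^p \le e^{C(p-1)t}\|f_0\|_{L^p}^p$, i.e. $\|f(t)\|_{L^p}\le e^{Ct\frac{p-1}{p}}\|f_0\|_{L^p}$; then integrating the differential inequality in time and using this bound on the right controls $\int_0^T\|\nabla_\omega f^{p/2}\|_{L^2}^2\,dt$, and taking the $p$-th root of that piece produces exactly the form \eqref{p-est} (the constant in the exponent is harmless: $e^{Ct(p-1)/p}\le e^{Ctp/(p-1)}$). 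For \eqref{bound-est}, I would apply \eqref{p-est} with the gradient term discarded, obtaining $\|f\|_{L^\infty(0,T;L^p(D))}\le e^{CT\frac{p}{p-1}}\|f_0\|_{L^p(D)}$, and let $p\to\infty$: since $\tfrac{p}{p-1}\to 1$ and $\|g\|_{L^p(D)}\to\|g\|_{L^\infty(D)}$ for $g\in L^1\cap L^\infty$ on the finite-measure-in-$\omega$ domain (using $f_0\in L^1\cap L^\infty$ and the propagated bounds to stay in that class), we recover $\|f\|_{L^\infty(D\times(0,T))}\le e^{CT}\|f_0\|_{L^\infty(D)}$.

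The main obstacle is not any single estimate but bookkeeping the constant uniformly: one must check that the constant $C$ in the bracket above is genuinely independent of $p$ (it is, coming only from $d$, $\nu$, and $\sup|\Omega|$), so that the $p\to\infty$ limit is legitimate; and for $U=\bbr^d$ one should confirm the integrations by parts in $x$ carry no boundary contributions, which for smooth solutions in the stated regularity class follows by a standard truncation/approximation argument.
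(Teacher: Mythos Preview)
Your proof is correct and follows the same overall energy-method strategy as the paper (multiply by $pf^{p-1}$, integrate, Grönwall, let $p\to\infty$), but you handle the force term differently. The paper expands $\nabla_\omega\cdot(f\nu\,\bbp_{\omega^\perp}\Omega)$ by the product rule, producing a cross term $\int_D f^{p-1}|\nabla_\omega f|$ that is controlled by H\"older/Young and absorbs half of the dissipation, leaving only $\tfrac{2(p-1)}{p}\|\nabla_\omega f^{p/2}\|_{L^2}^2$ on the left and a constant $C(\tfrac{p}{p-1}+p)$ on the right. You instead integrate by parts twice and use $\bbp_{\omega^\perp}\Omega\cdot\omega=0$ to kill the boundary-type terms from \eqref{formula-0}, reducing the force contribution directly to $-(p-1)\int_D f^p\,\nabla_\omega\cdot F_o$, which is bounded by $C(p-1)\|f\|_{L^p}^p$ with no loss of dissipation. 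Your route is slightly cleaner: it keeps the full $\tfrac{4(p-1)}{p}$ coefficient on the gradient term and yields the sharper exponent $CT\tfrac{p-1}{p}\le CT\tfrac{p}{p-1}$, so the stated bound \eqref{p-est} follows a fortiori and the $p\to\infty$ limit for \eqref{bound-est} goes through identically.
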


\smallskip

\begin{proof}
First of all, for any $1\le p<\infty$, it follows from \eqref{main} that
\begin{align}
\begin{aligned}
\frac{d}{dt}\int_{D} f^p dxd\omega &= - p \int_{D} f^{p-1} \nabla_{\omega}\cdot(f \nu(\omega\cdot\Omega)\bbp_{\omega^{\perp}}\Omega) dxd\omega + p\int_{D} f^{p-1} \Delta_{\omega} f dx d\omega \\
& =: I_1 + I_2\ .  \label{I_1}
\end{aligned}
\end{align}
Using formula \eqref{formula-0} and $\omega\cdot\nabla_{\omega} f =0$, we have 
\begin{align*}
\begin{aligned}
I_2&= -p(p-1)\int_{D} f^{p-2}\nabla_{\omega} f \cdot \nabla_{\omega} f dx d\omega + 2p\int_{D} f^{p-1} \omega\cdot \nabla_{\omega} f dx d\omega\\
&=-\frac{4(p-1)}{p} \int_{D} | \nabla_{\omega} f^{\frac{p}{2}}|^2 dx d\omega. 
\end{aligned}
\end{align*}

Next, by formula \eqref{formula-2},   the term $I_1$ from \eqref{I_1} is estimated as follows 
\begin{align*}
\begin{aligned}
I_1&= - p \int_{D} f^{p-1} \Big( \nu(\omega\cdot\Omega)\nabla_{\omega}  f\cdot \bbp_{\omega^{\perp}}\Omega 
+f \nu^{\prime}(\omega\cdot\Omega) |\bbp_{\omega^{\perp}}\Omega|^2 
- (d-1) f \nu(\omega\cdot\Omega)\omega\cdot\Omega \Big)dxd\omega\\
&\le p\| \nu(\omega\cdot\Omega)\|_{L^{\infty}} \int_{D} f^{p-1}|\nabla_{\omega}  f|dxd\omega
+p\| \nu^{\prime}(\omega\cdot\Omega)\|_{L^{\infty}} \int_{D} f^{p}dxd\omega\\
&\quad+p(d-1)\| \nu(\omega\cdot\Omega)\|_{L^{\infty}} \int_{D} f^{p}dxd\omega.
\end{aligned}
\end{align*}

In addition, using H$\ddot{\mbox{o}}$lder's inequality, the first integral in the right hand side above can be estimated by
\begin{align*}
\begin{aligned}
\int_{D} f^{p-1}|\nabla_{\omega}  f|dxd\omega &\le\Big(\int_{D} f^{p}dxd\omega\Big)^{1/2}
\Big(\int_{D} f^{p-2}|\nabla_{\omega}  f|^2 dxd\omega\Big)^{1/2}\\
&=\frac{2}{p}\Big(\int_{D} f^{p}dxd\omega\Big)^{1/2}
\Big(\int_{D} | \nabla_{\omega} f^{\frac{p}{2}}|^2 dx d\omega\Big)^{1/2}.
\end{aligned}
\end{align*}
Then, we have
\begin{align*}
\begin{aligned}
I_1\le \frac{2(p-1)}{p} \int_{D} | \nabla_{\omega} f^{\frac{p}{2}}|^2 dx d\omega + C(\frac{p}{p-1}+p)\int_{D} f^{p}dxd\omega.
\end{aligned}
\end{align*}
Finally, combining the estimates above for both $I_1$ and $I_2$, we get
\[
\frac{d}{dt}\int_{D} f^p dxd\omega + \frac{2(p-1)}{p} \int_{D} | \nabla_{\omega} f^{\frac{p}{2}}|^2 dx d\omega 
\le C(\frac{p}{p-1}+p)\int_{D} f^{p}dxd\omega,
\]
which yields a Gronwall type inequality
\[
\frac{d}{dt} \|f\|_{L^p(D)} \le C\frac{p}{p-1} \|f\|_{L^p(D)}.
\]
Therefore,
\[
\|f\|_{L^{\infty}(0,T; L^p(D))} \le e^{CT\frac{p}{p-1}}\|f_0\|_{L^p(D)},
\]
which implies the $L^p$ estimate in \eqref{p-est}. Hence, taking $p\to\infty$, yields the  $L^{\infty}$ bound \eqref{bound-est}.
\end{proof}

\begin{remark}
The boundedness of the alignment  vector $\Omega$ is essential for  the proof of Lemma \ref{lem-priori},  and the a priori estimates \eqref{p-est} and \eqref{bound-est} still hold for $\Omega= \Omega(f)$ bounded for any $f$. 
\end{remark}

\medskip

The following lemma provides the compactness property that ensures the strong $L^p$ convergence of solutions to the initial value problem associated to linear equation \eqref{main-compact2}.  Such strong compactness property relies on the boundedness of both the force term and velocity space (notice that the velocity variable would be unbounded, we would have to use the celebrated velocity averaging lemma \cite{K-M-T,P-S}).  As mentioned earlier, the compactness property obtained from next lemma is crucial for the existence proof of  Theorem \ref{thm-exist}.  

\begin{lemma}\label{lem-compact}
Assume that $f_0$ satisfies \eqref{initial-con}, and $f_n$ is a smooth solution to 
\begin{align}
\begin{aligned} \label{main-compact2}
&\partial_t f_n+ \omega\cdot\nabla_x f_n= -\nabla_{\omega}\cdot\Big( f_n\nu(\omega\cdot F_n)\bbp_{\omega^{\perp}} F_n  \Big) + \Delta_{\omega} f_n,\\
&f_n(x,\omega,0) = f_0(x,\omega),
\end{aligned}
\end{align}
where $F_n~:~U\times\bbr_{+} \rightarrow \bbr^d$ is a given function of $(t, x)$. \\
If the sequence $(F_n)$ is bounded in $L^{\infty}(U\times (0,T))$, then there exists a limit function $f$ such that, up to a subsequence, 
\[
f_n \rightarrow f \quad\mbox{as}~n\rightarrow \infty ~\mbox{in}~L^{p}(D\times (0,T))\cap L^{2}(U\times (0,T) ; H^1(\bbs^{d-1}))'\, , \quad
1\leq p < \infty.
\]
Moreover, the associated sequence
\[
{J}_n:= \int_{D} K(|x-y|)\omega f_n (y,\omega, t) dy d\omega 
\]
strongly converges to the corresponding limit ${J}$ in $L^{p}(U\times (0,T))$
where
\[
{J}:= \int_{D} K(|x-y|)\omega f (y,\omega, t) dy d\omega.
\]
\end{lemma}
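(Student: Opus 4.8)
The plan is to combine the uniform a priori bounds of Lemma~\ref{lem-priori} with a velocity-averaging argument — which is elementary here because $\bbs^{d-1}$ is a \emph{compact} manifold — and with a spherical-harmonics expansion in the $\omega$ variable. First I would record the uniform estimates: since $(F_n)$ is bounded in $L^\infty(U\times(0,T))$ independently of $n$, Lemma~\ref{lem-priori} applied with $\Omega=F_n$ gives, uniformly in $n$, that $f_n$ is bounded in $L^\infty(0,T;L^p(D))$ for every $1\le p\le\infty$ and that $\nabla_\omega f_n$ is bounded in $L^2(D\times(0,T))$ (and, integrating \eqref{main-compact2} over $D$, $\|f_n(t)\|_{L^1(D)}=\|f_0\|_{L^1(D)}$). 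I would then rewrite \eqref{main-compact2} as $\partial_t f_n+\omega\cdot\nabla_x f_n=\nabla_\omega\cdot W_n$ with $W_n:=\nabla_\omega f_n-f_n\,\nu(\omega\cdot F_n)\,\bbp_{\omega^{\perp}}F_n$; by the above together with the boundedness of $\nu$ and of $F_n$, the field $W_n$ is bounded in $L^2(D\times(0,T))$, hence the source $g_n:=\nabla_\omega\cdot W_n$ is bounded in $L^2(U\times(0,T);(H^1(\bbs^{d-1}))')$ and $\partial_t f_n=g_n-\omega\cdot\nabla_x f_n$ is bounded in $L^2(0,T;(H^1(D))')$.

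The heart of the proof is compactness in the spatial variable, and this is the step I expect to be the main obstacle: the estimates above yield regularity of $f_n$ only in $\omega$ (and, weakly, in $t$) and \emph{none} in $x$; worse, the coefficient $\nu(\omega\cdot F_n)\bbp_{\omega^{\perp}}F_n$ carries no spatial modulus of continuity, so one cannot gain $x$-compactness simply by mollifying in $x$ and letting the mollification parameter vanish. The spatial compactness has to come entirely from the free-transport operator $\omega\cdot\nabla_x$. Here one uses that $\bbs^{d-1}$ is compact and, for $d\ge2$, curved, so that $\sup_{\tau\in\bbr}\big|\{\omega\in\bbs^{d-1}:|\tau+\xi\cdot\omega|\le\varepsilon\}\big|\le C\varepsilon^{1/2}$ for every unit vector $\xi$; a direct Fourier estimate in $(x,t)$ (no weighted or unbounded-velocity averaging machinery is needed) then shows that for each fixed $\psi\in C^\infty(\bbs^{d-1})$ the velocity moment $\rho^{\psi}_n:=\int_{\bbs^{d-1}}f_n\,\psi\,d\omega$ is bounded in $H^{s}_{x}$ for some $s=s(d)>0$; together with the bound on $\partial_t\rho^{\psi}_n$ coming from the first paragraph, the Aubin--Lions--Simon lemma makes $(\rho^{\psi}_n)$ relatively compact in $L^2_{loc}(U\times(0,T))$. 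Now expand $f_n(x,\cdot,t)=\sum_{k\ge0}c_{n,k}(x,t)\,Y_k$ in an $L^2(\bbs^{d-1})$-orthonormal basis of spherical harmonics, $-\Delta_\omega Y_k=\lambda_k Y_k$ with $\lambda_k\uparrow\infty$; the $H^1_\omega$-bound reads $\int_{U\times(0,T)}\sum_k(1+\lambda_k)\,|c_{n,k}|^2\le C$, so the high-frequency tails $\sum_{k>K}\|c_{n,k}\|_{L^2(U\times(0,T))}^2$ are small uniformly in $n$, while each of the finitely many low-frequency coefficients $c_{n,k}=\rho_n^{Y_k}$ ($k\le K$) is relatively compact in $L^2_{loc}$. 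Hence $(f_n)$ is totally bounded, i.e.\ relatively compact, in $L^2_{loc}(D\times(0,T))$, so along a subsequence $f_n\to f$ in $L^2_{loc}(D\times(0,T))$ and a.e.

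To conclude I would upgrade the convergence: using the uniform $L^\infty(D\times(0,T))$ bound and a.e.\ convergence, dominated convergence gives $f_n\to f$ in $L^p_{loc}(D\times(0,T))$ for every $1\le p<\infty$; for $U=\bbt^d$ this is the claim, while for $U=\bbr^d$ one promotes $L^p_{loc}$ to $L^p(D\times(0,T))$ by the linearity of \eqref{main-compact2} in $f_n$ and finite speed of propagation, which propagates the tightness in $x$ of $f_0\in L^p$. Since $L^2(\bbs^{d-1})\hookrightarrow(H^1(\bbs^{d-1}))'$ continuously, the same subsequence also converges in $L^2(U\times(0,T);(H^1(\bbs^{d-1}))')$. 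Finally, writing $J_n=K(|\cdot|)*_x m_n$ with $m_n:=\int_{\bbs^{d-1}}\omega f_n\,d\omega$, the convergence just obtained gives $m_n\to m:=\int_{\bbs^{d-1}}\omega f\,d\omega$ in $L^p(U\times(0,T))$, and since convolution with $K\in L^1(U)$ is continuous on $L^p$ (Young's inequality), $J_n\to J=K(|\cdot|)*_x m$ in $L^p(U\times(0,T))$. The one genuinely nontrivial ingredient throughout is the spatial compactness extracted from the transport operator on the compact velocity manifold $\bbs^{d-1}$; the rest is bookkeeping with the energy estimates.
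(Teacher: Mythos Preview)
Your proposal is sound, but it takes a genuinely different route from the paper. The paper \emph{never} invokes velocity averaging, Aubin--Lions, or a spherical-harmonic decomposition; it does not extract spatial compactness at all. Instead, it first passes to a subsequence along which $F_n\rightharpoonup F$ weak-$*$ in $L^\infty(U\times(0,T))$, then \emph{defines} $f$ to be the solution of the same linear equation \eqref{main-compact2} with coefficient $F$, and estimates $f_n-f$ directly by the $L^p$ energy method of Lemma~\ref{lem-priori}: multiplying the equation for $f_n-f$ by $p(f_n-f)^{p-1}$, the ``bad'' cross terms coming from $F_n-F$ are rewritten via \eqref{formula-2} as $\int_D\Phi\cdot(F_n-F)$ with a weight $\Phi$ bounded in $L^1(D\times(0,T))$ uniformly in $n$, and the weak-$*$ convergence of $F_n$ drives this to zero. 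This yields $f_n\to f$ in $L^p(D\times(0,T))$ and $\nabla_\omega f_n\to\nabla_\omega f$ in $L^2(D\times(0,T))$ simultaneously; the convergence of $J_n$ then follows by Young's inequality exactly as in your last step.

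What each approach buys: the paper's argument is a pure stability/energy estimate, so it works on $U=\bbr^d$ with no tightness discussion whatsoever, and it identifies the limit $f$ explicitly as the solution with the limiting coefficient $F$ (which is precisely what is needed in Sections~4.3--4.4). Your approach is the standard kinetic toolbox and is more modular, but it costs you the extra tightness step on $\bbr^d$; your ``finite speed of propagation'' idea is correct in spirit (only $\omega\cdot\nabla_x$ moves mass in $x$, at speed $\le1$), but to make it rigorous you would either run a localized version of the $L^p$ estimate of Lemma~\ref{lem-priori} with a spatial cutoff, or split $f_0=f_0\mathbf 1_{B_R}+f_0\mathbf 1_{B_R^c}$ and use linearity plus \eqref{p-est} to control the tail uniformly in $n$. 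The paper sidesteps all of this.
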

\begin{proof}
Since the sequence $(F_n)$ is bounded in $L^{\infty}(U\times (0,T))$, there exists $F\in L^{\infty}(U\times (0,T))$ such that, up to a subsequence,
\beq\label{weak-co}
F_n \rightharpoonup F\quad \mbox{weakly}-*~\mbox{in}~L^{\infty}(U\times (0,T)). 
\eeq
Let $f$ be a solution of \eqref{main-compact2} corresponding to the limiting $F$.  Then, the following identity holds
\begin{align}\label{linear-error}
\begin{aligned}
\partial_t (f_n-f) + \omega\cdot\nabla_x (f_n-f) &= -\nabla_{\omega}\cdot\Big( (f_n-f)\nu(\omega\cdot F_n)\bbp_{\omega^{\perp}} F_n  \Big)\\
&\quad -\nabla_{\omega}\cdot\Big( f (\nu(\omega\cdot F_n)-\nu(\omega\cdot F))\bbp_{\omega^{\perp}} F_n) \Big)\\ 
&\quad -\nabla_{\omega}\cdot\Big( f\nu(\omega\cdot F) \bbp_{\omega^{\perp}} (F_n- F ) \Big) + \Delta_{\omega} (f_n -f).
\end{aligned}
\end{align}

Next, for any fixed $p\in [1,\infty)$, multiplying the above equation by $p(f_n-f)^{p-1}$ and  integrating over $D$ yields the identity
\begin{align}
\begin{aligned}\label{identity-Ji}
&\frac{d}{dt}\int_{D} (f_n-f)^p dxd\omega \\
&\quad=- p \int_{D} (f_n-f)^{p-1} \nabla_{\omega}\cdot\Big( (f_n-f)\nu(\omega\cdot F_n)\bbp_{\omega^{\perp}} F_n  \Big)dxd\omega \\
&\qquad - p \int_{D} (f_n-f)^{p-1} \nabla_{\omega}\cdot\Big( f (\nu(\omega\cdot F_n)-\nu(\omega\cdot F)) \bbp_{\omega^{\perp}} F_n \Big)dxd\omega \\
&\qquad - p \int_{D} (f_n-f)^{p-1}\nabla_{\omega}\cdot\Big( f \nu(\omega\cdot F)\bbp_{\omega^{\perp}} (F_n- F ) \Big)dxd\omega\\
&\qquad + p \int_{D} (f_n-f)^{p-1} \Delta_{\omega} (f_n -f)dxd\omega\\
&=:\mathcal{J}_1 + \mathcal{J}_2+\mathcal{J}_3+\mathcal{J}_4.
\end{aligned}
\end{align}

We first estimate  the term $\mathcal{J}_1$ using the same arguments as  the ones used in  Lemma \ref{lem-priori}  in order to estimate $I_1$. Indeed, 
\begin{align*}
\begin{aligned}
\mathcal{J}_1 &= - p \int_{D} (f_n-f)^{p-1} \Big( \nu(\omega\cdot F_n)\nabla_{\omega}  (f_n-f)\cdot \bbp_{\omega^{\perp}}F_n 
+(f_n-f) \nu^{\prime}(\omega\cdot F_n) |\bbp_{\omega^{\perp}}F_n|^2\\ 
&\qquad- (d-1) (f_n-f) \nu(\omega\cdot F_n)\omega\cdot F_n \Big)dxd\omega\\
&\le p\| \nu(\omega\cdot F_n)\|_{L^{\infty}}\|F_n\|_{L^{\infty}} \int_{D} (f_n-f)^{p-1}|\nabla_{\omega} (f_n-f)|dxd\omega\\
&\quad+p\| \nu^{\prime}(\omega\cdot F_n)\|_{L^{\infty}} \|F_n\|_{L^{\infty}}^2\int_{D} (f_n-f)^{p}dxd\omega\\
&\quad+p(d-1)\| \nu(\omega\cdot F_n)\|_{L^{\infty}}\|F_n\|_{L^{\infty}} \int_{D} (f_n-f)^{p}dxd\omega.\\
&\le \frac{2(p-1)}{p} \int_{D} | \nabla_{\omega}  (f_n-f)^{\frac{p}{2}}|^2 dx d\omega + \frac{Cp^2}{p-1}\int_{D} (f_n-f)^{p}dxd\omega.
\end{aligned}
\end{align*}
Similarly, $\mathcal{J}_4 $ is also estimated as done for ${I}_2$ in  the proof of Lemma \ref{lem-priori}, 
\[
\mathcal{J}_4 = -\frac{4(p-1)}{p} \int_{D} | \nabla_{\omega} (f_n-f)^{\frac{p}{2}}|^2 dx d\omega.
\]
Hence, gathering these two last estimates, identity \eqref{identity-Ji} yields the following estimate
\begin{align*}
\begin{aligned}
\frac{d}{dt}\int_{D} (f_n-f)^p dxd\omega \le C\int_{D} (f_n-f)^p dxd\omega-\frac{2(p-1)}{p} \int_{D} | \nabla_{\omega} (f_n-f)^{\frac{p}{2}}|^2 dx d\omega + \mathcal{J}_2+\mathcal{J}_3.
\end{aligned}
\end{align*}

Next, since $f_n=f$ at $t=0$, then applying the Gronwall's inequality to the above inequality, it holds that for any $0<t\le T$,
\begin{align*}
\begin{aligned}
\int_{D} (f_n-f)^p dxd\omega +\frac{2(p-1)}{p} \int_0^t \int_{D} | \nabla_{\omega} (f_n-f)^{\frac{p}{2}}|^2 dx d\omega ds \le e^{CT}\int_0^t (\mathcal{J}_2+\mathcal{J}_3 )(s) ds.
\end{aligned}
\end{align*}

The terms $\mathcal{J}_2$  and $\mathcal{J}_3$ can be rewritten using the calculaus on the sphere formulas \eqref{formula-2} as follows. 
First, note that the term   $\mathcal{J}_2$ satisfies the identity
\begin{align*}
\begin{aligned}
\mathcal{J}_2&= - p \int_{D} (f_n-f)^{p-1} \Big[ (\nu(\omega\cdot F_n)-\nu(\omega\cdot F))\nabla_{\omega} f\cdot \bbp_{\omega^{\perp}}F_n \\
&\qquad +f (\nu^{\prime}(\omega\cdot F_n)F_n-\nu^{\prime}(\omega\cdot F)F)\cdot \bbp_{\omega^{\perp}}F_n\\ 
&\qquad- (d-1) f (\nu(\omega\cdot F_n)-\nu(\omega\cdot F))\omega\cdot F_n \Big]dxd\omega\\
&= - p \int_{D} (f_n-f)^{p-1} \Big[ \nu^{\prime}(\omega\cdot F^*_n) \omega\cdot (F_n -F)\nabla_{\omega} f\cdot \bbp_{\omega^{\perp}}F_n \\
&\qquad +f \Big(\nu^{\prime}(\omega\cdot F_n)(F_n-F)+ \nu^{\prime\prime}(\omega\cdot F^{**}_n)\omega\cdot(F_n-F) F\Big)\cdot \bbp_{\omega^{\perp}}F_n\\ 
&\qquad- (d-1) f  \nu^{\prime}(\omega\cdot F^*_n) \omega\cdot (F_n -F)\omega\cdot F_n \Big]dxd\omega\\
&= - p \int_{D} (f_n-f)^{p-1} \Big[ \nu^{\prime}(\omega\cdot F^*_n)\nabla_{\omega} f\cdot \bbp_{\omega^{\perp}}F_n\omega \\
&\qquad +f \nu^{\prime}(\omega\cdot F_n)\bbp_{\omega^{\perp}}F_n+ f\nu^{\prime\prime}(\omega\cdot F^{**}_n)F\cdot\bbp_{\omega^{\perp}}F_n \omega  \\ 
&\qquad- (d-1) f  \nu^{\prime}(\omega\cdot F^*_n)\omega\cdot F_n \omega \Big]\cdot (F_n -F) dxd\omega,
\end{aligned}
\end{align*}
where $F^*_n$ and $F^{**}_n$ are some bounded functions due to the mean value theorem property, depending solely on the known bounded functions $F_n(x,t)$ and its limit $F$ defined in 
\eqref{weak-co}.\\
Similarly, also by the identities in \eqref{formula-2}, the term  $\mathcal{J}_3$ satisfies the identity
\begin{align*}
\begin{aligned}
\mathcal{J}_3&= - p \int_{D} (f_n-f)^{p-1} \Big[ \nu(\omega\cdot F)\nabla_{\omega} f\cdot \bbp_{\omega^{\perp}}(F_n-F)
+f \nu^{\prime}(\omega\cdot F)\bbp_{\omega^{\perp}}F\cdot \bbp_{\omega^{\perp}}(F_n-F) \\
&\qquad - (d-1) f \nu(\omega\cdot F)\omega\cdot (F_n-F) \Big]dxd\omega\\
&= - p \int_{D} (f_n-f)^{p-1} \Big[ \nu(\omega\cdot F)\nabla_{\omega} f +f \nu^{\prime}(\omega\cdot F)\bbp_{\omega^{\perp}}F
  - (d-1)f \nu(\omega\cdot F)\omega \Big]\cdot (F_n-F) dxd\omega.\\
\end{aligned}
\end{align*}

Thus, we get the {\sl weighted} estimate
\begin{align}
\begin{aligned}\label{vanish}
&\|f_n-f\|_{L^{p}(D)}^p +\frac{4(p-1)}{p} \int_0^T \int_{D} | \nabla_{\omega} (f_n-f)^{\frac{p}{2}}|^2 dx d\omega ds \\
&\le e^{CT} \int_0^T \int_{D} \Phi(x,w,s)\, \cdot  (F_n-F) dxd\omega ds,
\end{aligned}
\end{align}
where the weight function, given by  
\begin{align*}
\begin{aligned}
\Phi(x,w,s) &=  -p(f_n-f)^{p-1} \Big[\nu^{\prime}(\omega\cdot F^*_n)\nabla_{\omega} f\cdot \bbp_{\omega^{\perp}}F_n\omega +f \nu^{\prime}(\omega\cdot F_n)\bbp_{\omega^{\perp}}F_n+ f\nu^{\prime\prime}(\omega\cdot F^{**}_n)F\cdot\bbp_{\omega^{\perp}}F_n \omega  \\ 
&\qquad- (d-1) f  \nu^{\prime}(\omega\cdot F^*_n)\omega\cdot F_n \omega + \nu(\omega\cdot F)\nabla_{\omega} f +f \nu^{\prime}(\omega\cdot F)\bbp_{\omega^{\perp}}F
  - (d-1)f \nu(\omega\cdot F)\omega \Big].
\end{aligned}
\end{align*}
 is shown to satisfy   $\Phi \in L^{1}(D\times (0,T))$.\\ 
In order to show this assertion, first we show the uniform control property of both $f_n$ and $f$ and their gradients. Indeed, by the uniform boundedness of $(F_n)$,  applying the same estimates as in Lemma \ref{lem-priori} for both $g=f_n$ and $f$, respectively, we obtain
\begin{align*}
\begin{aligned}
&\|g\|_{L^{\infty}(0,T; L^p(D))}\le C\|f_0\|_{L^p(D)},\quad 1\le p\le \infty, \\
& \|\nabla_{\omega}g^{\frac{p}{2}}\|_{L^{2}(D\times (0,T))}\le C\|f_0\|_{L^p(D)}^{p/2},\quad 1\le p<\infty,
\end{aligned}
\end{align*}
where the positive constant $C$ only depends on $p$ and $T$. \\

Next, by H$\ddot{\mbox{o}}$lder's inequality follows that
\begin{align*}
\begin{aligned}
\int_0^T \int_{D}(f_n-f)^{p-1} \nabla_{\omega} f dxd\omega ds   &\le\Big(\int_{D} (f_n-f)^{p}dxd\omega\Big)^{1/2}
\Big(\int_{D} (f_n-f)^{p-2}|\nabla_{\omega}  f|^2 dxd\omega\Big)^{1/2}\\
&\le C\Big(\int_{D} (f_n^{p} + f^p) dxd\omega\Big)^{1/2}
\Big(\int_{D} | \nabla_{\omega} f^{\frac{p}{2}}|^2 dx d\omega\Big)^{1/2}\\
&\le C_0,
\end{aligned}
\end{align*}
and 
\begin{align*}
\begin{aligned}
\int_0^T \int_{D}(f_n-f)^{p-1} f dxd\omega ds   &\le\Big(\int_{D} (f_n-f)^{p}dxd\omega\Big)^{\frac{p-1}{p}}
\Big(\int_{D} f^{p} dxd\omega\Big)^{\frac{1}{p}}\\
&\le C\Big(\int_{D} (f_n^p+f^p)dxd\omega\Big)^{\frac{p-1}{p}}
\Big(\int_{D} f^{p} dxd\omega\Big)^{\frac{1}{p}}\\
&\le C_0,
\end{aligned}
\end{align*}
where the positive constant $C_0$ depends only on $\|f_0\|_{L^p(D)}$.\\

Therefore, the weight function $\Phi(x,w,t)$ can be estimated by
\begin{align*}
\begin{aligned}
\|\Phi \|_{L^{1}(D\times (0,T))}&\le C_*( \|(f_n-f)^{p-1} \nabla_{\omega} f\|_{L^{1}(D\times (0,T))}+\|(f_n-f)^{p-1}f\|_{L^{1}(D\times (0,T))})\\
&\le C_*C_0,
\end{aligned}
\end{align*}
where the positive constant  $C_*$  is given by
\begin{align*}
\begin{aligned}
C_*&= pd\Big[\Big((\| \nu^{\prime}(\omega\cdot F^*_n)\|_{L^{\infty}}+\| \nu^{\prime}(\omega\cdot F_n)\|_{L^{\infty}})+\| \nu^{\prime\prime}(\omega\cdot F^{**}_n)\|_{L^{\infty}}\| F\|_{L^{\infty}} \Big)\| F_n\|_{L^{\infty}}\\
&\quad+\| \nu(\omega\cdot F)\|_{L^{\infty}}+\| \nu^{\prime}(\omega\cdot F)\|_{L^{\infty}}\| F\|_{L^{\infty}} \Big],
\end{aligned}
\end{align*}
which does not depend on $n$ thanks to the uniform boundedness of the sequence $F_n$.\\

Hence, applying \eqref{weak-co} to \eqref{vanish}, it follows that
\begin{align}
\begin{aligned}\label{conclude}
&f_n\rightarrow f\quad\mbox{in} ~L^{p}(D\times (0,T)),\\
&\nabla_{\omega}f_n\rightarrow \nabla_{\omega}f\quad\mbox{in} ~L^{2}(D\times (0,T)). 
\end{aligned}
\end{align}

Finally, in order to complete the proof of Lemma~\ref{lem-compact},
 it remains to show  that \eqref{conclude} implies the strong convergence of the associated sequence $({J}_n)=(J(f_n))$ towards $J(f)$. 
Indeed,  Minkowski inequality, H$\ddot{\mbox{o}}$lder's inequality and Young's inequality yield
\begin{align}
\begin{aligned}\label{converge-J}
\|{J}_n-{J}\|_{L^{p}(U\times (0,T))}
&=\Big( \int_0^T \int_{U} \Big| \int_{\bbs^{d-1}} K*_{x}(f_n-f) \omega  d\omega\Big|^{p} dx ds \Big)^{\frac{1}{p}}\\
&\le \int_{\bbs^{d-1}}\Big(\int_0^T \int_{U} |K*_{x}(f_n-f)|^{p} dx ds \Big)^{\frac{1}{p}}d\omega\\
&\le C \Big(\int_0^T \int_{\bbs^{d-1}} \|K*_{x}(f_n-f)\|_{L^p(U)}^{p} d\omega ds \Big)^{\frac{1}{p}}\\
&\le C \|K\|_{L^1(\bbr^d)} \|{f}_n-f\|_{L^{p}(D\times (0,T))},
\end{aligned}
\end{align}
which completes the proof.
\end{proof}

\section{Proof of Existence - Theorem \ref{thm-exist}}
\setcounter{equation}{0}
The proof of Theorem \ref{thm-exist} entices the construction of  an iteration scheme that  generates a sequence $(f_n)$, where $f_n$ is a solution to the linear equation \eqref{main-compact2}  at $n$-th step, 
with $F_n:=\Omega(f_{n-1})$ evaluated at the $(n-1)$-{th} solution $f_{n-1}$ obtained in the previous $(n-1)$-th step.

This first intuitive approach confronts  a difficulty since   such $n$-iteration scheme generating the sequence $f_n$ does not secure the non-zero momentum  $ |J(f_n)|> 0$, even if   $|J(f_{n-1})| >0$.\\  
In fact, if that would be the case, the term $\Omega(f_{n})$ would be undefined and therefore we could not secure it is bounded. 
In particular, since  the compactness properties of Lemma \ref{lem-priori}  and Lemma~\ref{lem-compact} require a bounded force term ( in \eqref{main-compact} and \eqref{main-compact2} respectively) then,  with  with at least the available tools developed in this manuscript, it would not be possible to secure an existence of a solution $f_{n+1}$  for the next $n$-iterative  step.

Hence, a way to avoid this difficulty can be acomplished by the  use of an $\eps$-regularization approach by adding an arbitrary $\eps>0$ parameter to the denominator of $\Omega(f_n)$, for all $n\in \bbn$. Such regularization generates a  double parameter $(\eps,n)$ sequence of  solutions $f_{\eps,n}$ 
that it is shown to satisfy the property  $|J(f_{\eps,n})|>0$ for  all $n\in \bbn$, uniformly in $\eps>0$.

\medskip
    
\subsection{The $\eps$-regularized equation}  The $\eps$-regularization approach consists in solving the non-linear problem \eqref{main} by adding $\eps>0$ to the denominator of $\Omega(f)$, 
\begin{align}
\begin{aligned} \label{main-app}
&\partial_t f_{\eps} + \omega\cdot\nabla_x f_{\eps} = -\nabla_{\omega}\cdot\Big(  f_{\eps}\nu(\omega\cdot \Omega_{\eps} )\bbp_{\omega^{\perp}}\Omega_{\eps} \Big) + \Delta_{\omega} f_{\eps},\\
&\Omega_{\eps}(f_{\eps})(x,t):= \frac{J_{\eps}(f_{\eps})(x,t)}{|J_{\eps}(f_{\eps})(x,t)|+{\eps}}, \quad J_{\eps}(f_{\eps})(x,t)= \int_{U \times\bbs^{d-1}} K(|x-y|)\omega f_{\eps} (y,\omega, t) dy d\omega\\
&f_{\eps}(x,\omega,0) = f_0(x,\omega),  \quad ~x\in U,~\omega\in\bbs^{d-1},~t>0.
\end{aligned}
\end{align}

This new non-linear $\eps$-problem is then solved by generating  a sequence of solutions $f_{\eps,n}$ to \eqref{main-compact2} with a bounded $F_{\eps,n}:= \Omega_{\eps}(f_{\eps, n-1})$ for the previous iterated solution $f_{\eps,n-1}$.   

In the sequel, we  show first that is possible to construct a sequence of solutions  $f_{\eps,n}$  converging to $f_{\eps}$ in $L^{p}(D\times (0,T))\cap L^{2}(U\times (0,T) ; H^1(\bbs^{d-1}))$, $1\leq p\leq \infty$ for any $\eps>0$, so the results remains true in the $\eps\to 0$ limit. 

The details of this procedure are as follows.  

\medskip

\subsection{Construction of approximate solutions}
The construction of  an $(\eps,n)$-sequence of approximate solutions $f_{\eps,n}$ to the non-linear $\eps$-regularized equation \eqref{main-app} is now done by the following iteration scheme.
For any fixed $\eps>0$,  set  $f_{\eps,0}(x,\omega,t):= f_0(x,\omega)$ to be the initial state associated to \eqref{main}.  Then,  define $f_{\eps,1}$ as the solution of the following linear initial value problem
\begin{align*}
\begin{aligned} 
&\partial_t f_{\eps,1}+ \omega\cdot\nabla_x f_{\eps,1} = -\nabla_{\omega}\cdot\Big( f_{\eps,1}\nu ( \omega \cdot {\Omega}_{\eps,0})\bbp_{\omega^{\perp}} {\Omega}_{\eps,0} \Big) + \Delta_{\omega} f_{\eps,1},\\
& {\Omega}_{\eps,0} (x,t) =\frac{J_{\eps,0}(x,t)}{|J_{\eps,0}(x,t)| +\eps},
 \quad J_{\eps,0}(x,t)= \int_{U \times\bbs^{d-1}} K(|x-y|)\omega f_{\eps,0} (y,\omega, t) dy d\omega\\
&f_{\eps,1}(x,\omega,0) = f_0(x,\omega).
\end{aligned}
\end{align*}
Inductively,  each $f_{\eps,n+1}$ is define to be the solution of the following linear initial value problem
\begin{align}
\begin{aligned} \label{main-linear}
&\partial_t f_{\eps,n+1}+ \omega\cdot\nabla_x f_{\eps,n+1} = -\nabla_{\omega}\cdot\Big( f_{\eps,n+1}\nu ( \omega \cdot {\Omega}_{\eps,n})\bbp_{\omega^{\perp}} {\Omega}_{\eps,n} \Big) + \Delta_{\omega} f_{\eps,n+1},\\
&{\Omega}_{\eps,n} (x,t) =\frac{J_{\eps,n}(x,t)}{|J_{\eps,n}(x,t)| +\eps},
 \quad J_{\eps,n}(x,t)= \int_{U \times\bbs^{d-1}} K(|x-y|)\omega f_{\eps,n} (y,\omega, t) dy d\omega\\
&f_{\eps,n+1}(x,\omega,0) = f_0(x,\omega).
\end{aligned}
\end{align}

The justification for  unique solvability of the $(\eps,n)$-approximate initial value problem \eqref{main-linear}, for $n\ge 1$, follows form the next lemma.


\begin{lemma}\label{lem-linear}
For any $T>0$, $\eps>0$, $n\ge 1$, assume that $f_{\eps,n}$ is a given integrable function and $f_0$ satisfies \eqref{initial-con}. Then, there exists a unique solution $f_{\eps,n+1}\ge 0$ to the equation \eqref{main-linear} satisfying the $L^p$-estimates: for any $1\le p<\infty$,
\beq\label{p-est-2}
\|f_{\eps,n+1}\|_{L^{\infty}(0,T; L^p(D))} +\frac{2(p-1)}{p} \|\nabla_{\omega}f_{\eps,n+1}^{\frac{p}{2}}\|_{L^{2}(D\times (0,T))}^{\frac{2}{p}} \le e^{CT\frac{p}{p-1}} \|f_0\|_{L^p(D)},
\eeq
and
\beq\label{bound-est-2}
\|f_{\eps,n+1}\|_{L^{\infty}(D\times (0,T))} \le e^{CT}\|f_0\|_{L^{\infty}(D)}.
\eeq
\end{lemma}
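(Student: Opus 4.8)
The idea is to read \eqref{main-linear} as a \emph{linear} kinetic Fokker--Planck equation with bounded measurable coefficients --- the only new information compared with Lemma \ref{lem-priori} being existence, nonnegativity and uniqueness, since the quantitative bounds \eqref{p-est-2}--\eqref{bound-est-2} will follow from that lemma once a solution regular enough to justify its computations is in hand. Indeed, as $f_{\eps,n}(\cdot,\cdot,t)\in L^1(D)$ and $K\in L^1(\bbr)$, Young's inequality gives $\|J_{\eps,n}(\cdot,t)\|_{L^1(U)}\le\|K\|_{L^1}\|f_{\eps,n}(\cdot,t)\|_{L^1(D)}<\infty$, so $J_{\eps,n}$ is finite almost everywhere and, by construction of the regularization, $|\Omega_{\eps,n}(x,t)|=|J_{\eps,n}(x,t)|/(|J_{\eps,n}(x,t)|+\eps)\le1$ for every $(x,t)$. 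Hence the drift field $b:=\nu(\omega\cdot\Omega_{\eps,n})\,\bbp_{\omega^{\perp}}\Omega_{\eps,n}$ satisfies $\|b\|_{L^{\infty}(D\times(0,T))}\le\|\nu\|_{L^{\infty}}$, a bound independent of $\eps$ and $n$; this places \eqref{main-linear} exactly in the setting of Lemma \ref{lem-priori} with $\Omega=\Omega_{\eps,n}$.

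To build the solution I would use a vanishing-viscosity regularization in the $x$--variable. For $\delta>0$, mollify $b$ in $(t,x,\omega)$ to a smooth field $b_{\delta}$ with $\|b_{\delta}\|_{L^{\infty}}\le\|b\|_{L^{\infty}}$, and mollify $f_0$ to a smooth nonnegative $f_0^{\delta}$ with $\|f_0^{\delta}\|_{L^p(D)}\le\|f_0\|_{L^p(D)}$ for all $p\in[1,\infty]$, and solve
\[
\partial_t f^{\delta}+\omega\cdot\nabla_x f^{\delta}=-\nabla_{\omega}\!\cdot\!\big(f^{\delta}b_{\delta}\big)+\Delta_{\omega}f^{\delta}+\delta\,\Delta_x f^{\delta},\qquad f^{\delta}(\cdot,\cdot,0)=f_0^{\delta}.
\]
On $D=U\times\bbs^{d-1}$ this is a nondegenerate linear parabolic equation with smooth coefficients, so classical theory yields a unique smooth solution on $[0,T]$ (for $U=\bbr^d$ one first works on $B_R\times\bbs^{d-1}$ or in weighted spaces, which is routine), and the parabolic maximum principle --- which is insensitive to the sign of the zeroth-order coefficient $-\nabla_{\omega}\!\cdot\! b_{\delta}$ --- gives $f^{\delta}\ge0$. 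Reproducing the computation of Lemma \ref{lem-priori} for $f^{\delta}$ is now justified, and the only additional term, $\delta p\int_D(f^{\delta})^{p-1}\Delta_x f^{\delta}=-\delta p(p-1)\int_D(f^{\delta})^{p-2}|\nabla_x f^{\delta}|^2\le0$, has the favourable sign; hence $f^{\delta}$ satisfies \eqref{p-est-2}--\eqref{bound-est-2} uniformly in $\delta$, together with $\sqrt{\delta}\,\|\nabla_x f^{\delta}\|_{L^2(D\times(0,T))}\le C$.

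These bounds let me extract, along a subsequence, $f^{\delta}\rightharpoonup f_{\eps,n+1}$ weakly in $L^2(D\times(0,T))$ (and weakly-$*$ in $L^{\infty}$) with $\nabla_{\omega}f^{\delta}\rightharpoonup\nabla_{\omega}f_{\eps,n+1}$. Since every term in the weak formulation of the regularized equation is \emph{linear} in $f^{\delta}$, passing to the limit is straightforward: $b_{\delta}\to b$ strongly in $L^q$ on the (compact) support of each test function $\phi$, for all finite $q$, by dominated convergence, which together with the weak convergence of $f^{\delta}$ handles the drift term; the artificial-viscosity term is controlled by $\sqrt{\delta}\,\big(\sqrt{\delta}\,\|\nabla_x f^{\delta}\|_{L^2}\big)\,\|\nabla_x\phi\|_{L^2}\to0$; and $f_0^{\delta}\to f_0$ in $L^1(D)$. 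Thus $f_{\eps,n+1}$ is a weak solution of \eqref{main-linear}, it inherits \eqref{p-est-2}--\eqref{bound-est-2} by weak lower semicontinuity of norms, and $f_{\eps,n+1}\ge0$ as a weak limit of nonnegative functions.

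For uniqueness, linearity does the job: if $f^{(1)},f^{(2)}$ are two solutions with the same initial datum, their difference $w$ solves the homogeneous equation with $w|_{t=0}=0$; testing against $w$ --- after a DiPerna--Lions mollification in $x$, so that the transport term contributes $\frac12\int_D\omega\cdot\nabla_x(w^2)\,dxd\omega=0$ --- and estimating the drift and diffusion terms exactly as $I_1$ and $I_2$ in the proof of Lemma \ref{lem-priori} yields $\frac{d}{dt}\|w\|_{L^2(D)}^2\le C\|w\|_{L^2(D)}^2$, whence $w\equiv0$ by Gronwall. The only genuinely non-classical point in this plan is that \eqref{main-linear} is parabolic solely in $\omega$ and pure transport in $x$, which is why a direct parabolic existence theorem is unavailable and the vanishing-viscosity detour is needed; justifying the energy identities for the non-smooth limit (renormalization) and accommodating the infinite-measure case $U=\bbr^d$ are routine.
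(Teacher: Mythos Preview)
Your argument is correct, but it takes a different route from the paper's.  The paper does not regularize in $x$; instead it rewrites \eqref{main-linear} as a linear equation with bounded coefficients (using \eqref{formula-2} to expand the $\omega$--divergence), substitutes $\bar f=e^{-\lambda t}f$ to make the zeroth--order coefficient positive, and then invokes Lions' abstract existence theorem together with Degond's framework for kinetic Fokker--Planck equations to obtain a solution in
\[
Y:=\bigl\{\,f\in L^{2}([0,T]\times U;H^{1}(\bbs^{d-1})):\ \partial_t f+\omega\cdot\nabla_x f\in L^{2}([0,T]\times U;H^{-1}(\bbs^{d-1}))\,\bigr\}.
\]
Uniqueness and nonnegativity are then obtained from the Green identity $\langle\partial_t f+\omega\cdot\nabla_x f,\,g\rangle$ for $g=\bar f$ and $g=\bar f_{-}$ respectively, the parameter $\lambda$ being chosen large enough to absorb the lower--order terms; the $L^p$ estimates follow from Lemma~\ref{lem-priori} exactly as you say.

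The trade--off is this: the paper's argument is shorter and handles the degeneracy (parabolic only in $\omega$, transport in $x$) in one stroke by appealing to the right functional framework, but it is less self--contained, since it relies on the Lions/Degond machinery as a black box.  Your vanishing--viscosity route is more hands--on and uses only classical parabolic theory plus weak compactness, at the price of an extra limiting procedure and the need to sketch the DiPerna--Lions renormalization to close the energy identity for the limit; the observation that the artificial term $\delta\Delta_x$ contributes with the good sign in the $L^p$ computation is precisely what makes your approach painless.  Both proofs ultimately reduce the quantitative estimates to Lemma~\ref{lem-priori}, which is the common core.
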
 
The proof of Lemma \ref{lem-linear} follows the same argument as Degond's proof in \cite{D}. We include its proof in Appendix for the reader's convenience. 

\medskip

\subsection{Passing to the limit as $n\rightarrow \infty$} 
The convergence of  $f_{\eps,n}$  towards some limit function $f_{\eps}$, which solves the regularized equation \eqref{main-app}, is secured by the following proposition.

\begin{proposition}\label{prop-app}
For a given $T>0$ and arbitrary $\eps>0$, if $f_0$ satisfies \eqref{initial-con}, then there exists a weak solution $f_{\eps} \ge 0$ to equation \eqref{main-app} satisfying the $L^p$-estimates: 
for $1\le p<\infty$,  
\beq\label{p-est-1}
\|f_{\eps}\|_{L^{\infty}(0,T; L^p(D))} +\frac{2(p-1)}{p} \|\nabla_{\omega}f_{\eps}^{\frac{p}{2}}\|_{L^{2}(D\times (0,T))}^{\frac{2}{p}} \le e^{CT\frac{p}{p-1}}\|f_0\|_{L^p(D)},
\eeq
and
\beq\label{bound-est-1}
\|f_{\eps}\|_{L^{\infty}(D\times (0,T))} \le e^{CT} \|f_0\|_{L^{\infty}(D)}.
\eeq
\end{proposition}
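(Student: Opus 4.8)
The plan is to combine the uniform a priori bounds of Lemma~\ref{lem-linear} with the compactness Lemma~\ref{lem-compact}, the decisive feature being that the $\eps$-regularization makes the force field uniformly bounded and the normalization map Lipschitz, so that the iteration \eqref{main-linear} can be passed to the limit $n\to\infty$. Fix $\eps>0$. By Lemma~\ref{lem-linear}, every iterate $f_{\eps,n}$ exists, is nonnegative, and obeys \eqref{p-est-2}--\eqref{bound-est-2} with constants depending only on $p$ and $T$, hence uniformly in $n$ (and in $\eps$). Since $|\Omega_{\eps,n}|=|J_{\eps,n}|/(|J_{\eps,n}|+\eps)\le1$ pointwise, the forces $F_{\eps,n}:=\Omega_{\eps,n}$ are bounded by $1$ in $L^{\infty}(U\times(0,T))$ uniformly in $n$ --- precisely the hypothesis required by Lemma~\ref{lem-compact}.

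Applying Lemma~\ref{lem-compact} to $(f_{\eps,n})_{n\ge1}$, with $f_{\eps,n}$ solving \eqref{main-compact2} for the bounded force $F_{\eps,n-1}$, yields, up to a subsequence, a weak-$*$ limit $F_\eps$ of $(F_{\eps,n-1})$ in $L^{\infty}(U\times(0,T))$ with $|F_\eps|\le1$, and a limit $f_\eps$ --- the unique nonnegative weak solution of \eqref{main-compact2} with force $F_\eps$, well defined by the linear theory behind Lemma~\ref{lem-linear} (only boundedness of the force enters; cf.\ the Remark after Lemma~\ref{lem-priori}) --- such that $f_{\eps,n}\to f_\eps$ strongly in $L^{p}(D\times(0,T))$ for $1\le p<\infty$ and in $L^{2}(U\times(0,T);H^1(\bbs^{d-1}))'$, and $J(f_{\eps,n})\to J(f_\eps)$ strongly in $L^{p}(U\times(0,T))$.

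Granting for the moment that the limits of the consecutive iterates $f_{\eps,n}$ and $f_{\eps,n-1}$ agree (see the discussion below), so that $J(f_{\eps,n-1})\to J(f_\eps)$ strongly and, along a further subsequence, a.e., it remains to identify $F_\eps$ with $\Omega_\eps(f_\eps):=J(f_\eps)/(|J(f_\eps)|+\eps)$. The map $\Omega_\eps:\bbr^d\to\bbr^d$ is bounded by $1$ and globally Lipschitz (with constant of order $1/\eps$), hence continuous, so $F_{\eps,n-1}=\Omega_\eps(J(f_{\eps,n-1}))\to\Omega_\eps(J(f_\eps))$ a.e.\ and, by dominated convergence, in $L^q_{\mathrm{loc}}(U\times(0,T))$ for all $q<\infty$; matching with the weak-$*$ limit forces $F_\eps=\Omega_\eps(f_\eps)$. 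Thus the linear equation satisfied by $f_\eps$ is exactly the $\eps$-regularized equation \eqref{main-app}, and $f_\eps\ge0$ (an $L^p$-limit of nonnegative functions) is the desired weak solution. The bounds \eqref{p-est-1}--\eqref{bound-est-1} then pass to the limit from \eqref{p-est-2}--\eqref{bound-est-2} by lower semicontinuity: the uniform $L^{\infty}(0,T;L^p(D))$-bound gives the first term of \eqref{p-est-1} (its weak-$*$ limit being $f_\eps$ by the strong $L^p$ convergence), the uniform $L^2$-bound on $\nabla_\omega f_{\eps,n}^{p/2}$ gives the gradient term (its weak $L^2$-limit being $\nabla_\omega f_\eps^{p/2}$, again by the strong $L^p$ convergence of $f_{\eps,n}$), and $p\to\infty$ yields \eqref{bound-est-1}.

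The main obstacle is the circular dependence of $f_\eps$ on its own force $\Omega(f_\eps)$, which in the unregularized problem \eqref{main} is undefined where $J(f_\eps)$ vanishes; the $\eps$-regularization is exactly what turns $J\mapsto\Omega_\eps(J)$ into a bounded, globally Lipschitz map, so that the strong $L^p$-convergence of the fluxes furnished by Lemma~\ref{lem-compact} --- which itself relies crucially on the compactness of the velocity manifold $\bbs^{d-1}$ and the $L^1$-integrability of $K$ --- upgrades to a.e.\ convergence of the forces and closes the loop. The one point requiring care is that the limit $f_\eps$ is a genuine fixed point of the scheme, i.e.\ that the limits of the consecutive iterates $f_{\eps,n}$ and $f_{\eps,n-1}$ coincide; this is obtained from the contractivity of the $\eps$-regularized scheme on short time intervals (a consequence of the Lipschitz bound for $\Omega_\eps$ and the vanishing initial datum of the iterate differences), bootstrapped to $[0,T]$ via the uniform bound \eqref{bound-est-2} --- equivalently, the construction of $f_\eps$ can be recast as a Schauder fixed point for the solution operator of \eqref{main-compact2}, whose continuity and compactness are the content of Lemma~\ref{lem-compact}. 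The subsequent limit $\eps\to0$, which needs a separate uniform-in-$\eps$ analysis and the admissibility class $\mathcal A$ of \eqref{assume}, is treated afterwards.
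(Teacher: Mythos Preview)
Your argument follows the paper's strategy: invoke Lemma~\ref{lem-compact} on the uniformly bounded forces $\Omega_{\eps,n}$ to obtain strong convergence of $f_{\eps,n}$ and $J_{\eps,n}$, then exploit the Lipschitz property of the map $J\mapsto J/(|J|+\eps)$ to identify the limiting force with $\Omega_\eps(f_\eps)$. The paper carries out that Lipschitz estimate explicitly in $L^p$, obtaining $\|\Omega_{\eps,n}-\Omega_\eps\|_{L^p}^p\le C(\eps)\|J_{\eps,n}-J_\eps\|_{L^p}^p$, and then reads off the bounds \eqref{p-est-1}--\eqref{bound-est-1} by applying Lemma~\ref{lem-priori} directly to the limit $f_\eps$ (rather than by lower semicontinuity of norms along the iterates, as you do); both routes are fine. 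Where you are more scrupulous than the paper is the index-shift issue: since $f_{\eps,n+1}$ is driven by $\Omega_{\eps,n}=\Omega_\eps(J(f_{\eps,n}))$, the compactness lemma delivers convergence of $J(f_{\eps,n+1})$ along the extracted subsequence, whereas identifying the limit force requires convergence of $J(f_{\eps,n})$ along the \emph{same} subsequence. The paper's proof absorbs this shift without comment; your proposed closures---short-time contractivity coming from the $1/\eps$-Lipschitz bound on $\Omega_\eps$ together with the vanishing initial difference, or a Schauder fixed point for the solution operator of \eqref{main-compact2} whose continuity and compactness are precisely Lemma~\ref{lem-compact}---are both standard and make the passage rigorous.
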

\begin{proof}
Since the sequence $({\Omega}_{\eps,n})$ defined in \eqref{main-linear} is bounded in $L^{\infty}(U\times (0,T))$, we use Lemma \ref{lem-compact} with $F_{\eps,n}=\Omega_{\eps,n}$. Thus, there exists a limit function $f_{\eps}$ such that, up to a subsequence, 
\begin{align*}
\begin{aligned}
&f_{\eps,n} \rightarrow f_{\eps} \quad\mbox{as}~n\rightarrow \infty ~\mbox{in}~L^{p}(D\times (0,T))\cap L^{2}(U\times (0,T) ; H^1(\bbs^{d-1})),\\
&J_{\eps,n} \rightarrow J_{\eps} \quad\mbox{as}~n\rightarrow \infty ~\mbox{in}~L^{p}(U\times (0,T)),
\end{aligned}
\end{align*}
that  yields 
\[
\Omega_{\eps,n} ~\rightarrow ~\Omega_{\eps}:=\frac{J_{\eps}}{|J_{\eps}| +\eps}\quad \mbox{as}~n \to \infty~\mbox{ in}~ L^{\infty}(0,T; L^p(D)).
\]
Indeed, 
\begin{align*}
\begin{aligned}
&\int_{U} |\Omega_{\eps,n}-\Omega_{\eps}|^p dx \\
&\qquad = \int_{U} \Big| \frac{\eps(J_{\eps,n}-J_{\eps}) + |J_{\eps}|(J_{\eps,n}-J_{\eps}) +J_{\eps}(|J_{\eps}|-|J_{\eps,n}|)}{(|J_{\eps,n}|+{\eps})(|J_{\eps}|+{\eps})} \Big|^p dx\\
&\qquad \le \frac{1}{\eps^{p}} \int_{U} \Big| \frac{\eps(J_{\eps,n}-J_{\eps}) + |J_{\eps}|(J_{\eps,n}-J_{\eps}) +J_{\eps}(|J_{\eps}|-|J_{\eps,n}|)}{|J_{\eps}|+{\eps}} \Big|^{p} dx\\
&\qquad\le C(\eps)\int_{U} \Big(|J_{\eps,n}-J_{\eps}|^{p} + |J_{\eps,n}-J_{\eps}|^{p} +||J_{\eps,n}|-|J_{\eps}||^{p} \Big) dx\\
&\qquad\le C(\eps)\int_{U} |J_{\eps,n}-J_{\eps}|^{p}dx.
\end{aligned}
\end{align*}

Therefore, the limit $f_{\eps}$ satisfies the following weak formulation of \eqref{main-app}: for all $\phi\in C^{\infty}_c (D\times [0,T))$,
 \begin{align*}
\begin{aligned}
&\int_0^t\int_{D}f_{\eps} \partial_t\phi +f_{\eps}\omega\cdot\nabla_x \phi  + f_{\eps}F_{\eps}\cdot\nabla_{\omega} \phi - \nabla_{\omega} f_{\eps} \cdot\nabla_{\omega}\phi dxd\omega ds\\
&\hspace{3cm} +\int_{D} f_0 \phi(0,\cdot) dxd\omega = 0,\\
&F_{\eps}= \nu(\omega\cdot \Omega_{\eps}) \bbp_{\omega^{\perp}}\Omega_{\eps} ,\quad \Omega_{\eps}(x,t)=\frac{ J_{\eps}(x,t)}{|J_{\eps}(x,t)| +\eps}.
\end{aligned}
\end{align*}
In addition, using Lemma \ref{lem-priori} together with the boundedness of $\Omega_{\eps}$ above, the  $L^p$  estimates from \eqref{p-est-1} and \eqref{bound-est-1} follow. 
\end{proof} 
 
 \medskip

\subsection{Passing to the limit as $\eps\rightarrow 0$} 
The proof of Theorem \ref{thm-exist} is completed after showing the convergence from \eqref{main-app} to \eqref{main}  as $0<\eps\rightarrow 0$, in the weak sense. 
In fact, it is enough to show such limit for  any convergent sequence $0<\eps_k\rightarrow 0$.

First,  consider a sequence
\[
F_k:=\frac{J_{\eps_k}}{|J_{\eps_k}| +\eps_k},\quad J_{\eps_k}=\int_{\bbs^{d-1}} \omega f_{\eps_k} d\omega.
\]
Since such sequence is bounded in $L^{\infty}(U\times (0,T))$ uniformly in $\eps_k$, Lemma \ref{lem-compact} can be applied, so 
 there exists a limit function $f$ such that, up to a subsequence, 
\begin{align}
\begin{aligned}\label{J-con}
&f_{\eps_k}\rightarrow f \quad\mbox{as}~k\rightarrow \infty ~\mbox{in}~L^{p}(D\times (0,T))\cap L^{2}(U\times (0,T); H^1(\bbs^{d-1})),\\
&J_{\eps_k} \rightarrow J \quad\mbox{as}~k\rightarrow \infty ~\mbox{in}~L^{p}(U\times (0,T)).
\end{aligned}
\end{align}

Next, in order to see that  $f$ is the weak solution to \eqref{main} it is enough to show that $f$ satisfies the weak formulation \eqref{weak-form} as a limit of the following formulation for \eqref{main-app}: 
\begin{align*}
\begin{aligned}
&\int_0^t\int_{D}f_{\eps_k} \partial_t\phi +f_{\eps_k}\omega\cdot\nabla_x \phi  + f_{\eps_k} \nu \Big(\frac{\omega\cdot J_{\eps_k}}{|J_{\eps_k}|+{\eps_k}}\Big) \bbp_{\omega^{\perp}}\frac{J_{\eps_k}}{|J_{\eps_k}| +\eps_k} \cdot\nabla_{\omega} \phi -  \nabla_{\omega} f_{\eps_k} \cdot\nabla_{\omega}\phi dxd\omega ds\\
&\hspace{3cm} +\int_{D} f_0 \phi(0,\cdot) dxd\omega = 0,
\end{aligned}
\end{align*}
for any $\phi\in C^{\infty}_c (D\times [0,T))$.

By the convergence of $f_{\eps_k}$ in \eqref{J-con}, clearly all linear terms in the above formulation converge to their corresponding terms in \eqref{weak-form}. On the other hand, the convergence of the nonlinear term requires further justification provided in the following Lemma.

\begin{lemma}\label{lemma4.2}
Assume $|J(x,t)|>0$ as in \eqref{assume}. Then, as $k\to\infty$,
\begin{multline}\label{eps1}
\int_0^t\int_{D}  f_{\eps_k} \nu \Big(\frac{\omega\cdot J_{\eps_k}}{|J_{\eps_k}|+{\eps_k}}\Big) \bbp_{\omega^{\perp}}\frac{J_{\eps_k}}{|J_{\eps_k}| +\eps_k} \cdot\nabla_{\omega} \phi dxd\omega ds \ \ \ 
{\longrightarrow}\\
  \int_0^t\int_{D} f \nu \Big(\frac{\omega \cdot J}{|J|}\Big) 
\bbp_{\omega^{\perp}}  \frac{J}{|J|} \cdot\nabla_{\omega} \phi dxd\omega ds.
\end{multline}
\end{lemma}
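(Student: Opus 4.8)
The plan is to combine the strong compactness from Lemma~\ref{lem-compact} with the admissibility hypothesis \eqref{assume}: the point is to pass to the limit \emph{pointwise} inside the nonlinearity, and then conclude by a strong--strong product argument localized on the compact support of $\nabla_{\omega}\phi$. \textbf{Step 1 (a.e.\ convergence of the regularized direction field).} By \eqref{J-con} we have $J_{\eps_k}\to J$ in $L^{p}(U\times(0,T))$; extracting a further subsequence (not relabeled) we may also assume $J_{\eps_k}\to J$ a.e.\ on $U\times(0,T)$. Since $|J(x,t)|>0$ by \eqref{assume}, at a.e.\ point we have $|J_{\eps_k}|+\eps_k\to|J|>0$ as $k\to\infty$ (this is where $\eps_k\to0$ enters), hence
\[
\Omega_{\eps_k}:=\frac{J_{\eps_k}}{|J_{\eps_k}|+\eps_k}\ \longrightarrow\ \Omega:=\frac{J}{|J|}\qquad\text{a.e.\ on }U\times(0,T).
\]
Since $\nu$ is continuous and $\omega\mapsto\bbp_{\omega^{\perp}}$ is smooth, the coefficient $g_k:=\nu\big(\omega\cdot\Omega_{\eps_k}\big)\,\bbp_{\omega^{\perp}}\Omega_{\eps_k}\cdot\nabla_{\omega}\phi$ converges a.e.\ on $D\times(0,T)$ to $g:=\nu\big(\omega\cdot\Omega\big)\,\bbp_{\omega^{\perp}}\Omega\cdot\nabla_{\omega}\phi$.

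\textbf{Step 2 (strong convergence of the coefficient, then of the product).} Since $|\Omega_{\eps_k}|\le1$ and $\nu$ is bounded, $|g_k|\le\|\nu\|_{L^{\infty}}\|\nabla_{\omega}\phi\|_{L^{\infty}}$, and $g_k$ is supported in the compact set $\mathcal{K}:=\operatorname{supp}\nabla_{\omega}\phi\subset D\times[0,T)$. By the dominated convergence theorem, $g_k\to g$ strongly in $L^{r}(D\times(0,T))$ for every $r\in[1,\infty)$, in particular in $L^{2}$. Moreover, by \eqref{J-con} together with the uniform $L^{\infty}$ bound from \eqref{bound-est-1}, $f_{\eps_k}\to f$ strongly in $L^{2}(\mathcal{K})$ with $\|f_{\eps_k}\|_{L^{\infty}(\mathcal{K})},\|f\|_{L^{\infty}(\mathcal{K})}\le C$. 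Writing
\[
\int_0^t\!\!\int_{D} f_{\eps_k}g_k\,dxd\omega ds-\int_0^t\!\!\int_{D} fg\,dxd\omega ds=\int_0^t\!\!\int_{D}(f_{\eps_k}-f)g_k\,dxd\omega ds+\int_0^t\!\!\int_{D} f(g_k-g)\,dxd\omega ds,
\]
the first term on the right is bounded by $\|f_{\eps_k}-f\|_{L^{2}(\mathcal{K})}\|g_k\|_{L^{2}(\mathcal{K})}\to0$ and the second by $\|f\|_{L^{2}(\mathcal{K})}\|g_k-g\|_{L^{2}(\mathcal{K})}\to0$. This proves \eqref{eps1} along the extracted subsequence; since its right-hand side does not depend on the chosen subsequence, the standard subsequence argument yields \eqref{eps1} for the whole sequence, which is Lemma~\ref{lemma4.2}.

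The only delicate point is Step~1: it is exactly there that the constraint $|J|>0$ of \eqref{assume} is indispensable, since on $\{J=0\}$ the vectors $\Omega_{\eps_k}$ need not converge at all, and the regularization parameter must be sent to zero simultaneously with the extraction of an a.e.\ convergent subsequence of $J_{\eps_k}$. Once a.e.\ convergence of $\Omega_{\eps_k}$ is in hand, everything downstream is a routine dominated-convergence plus Cauchy--Schwarz computation, made possible by the compact support of $\nabla_{\omega}\phi$, which confines the whole analysis to a bounded region where the $L^{\infty}$ bounds upgrade $L^{p}$ convergence to $L^{2}$ convergence.
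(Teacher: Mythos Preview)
Your proof is correct and takes a somewhat different, arguably cleaner, route than the paper's. Both arguments hinge on the same crucial observation---that the admissibility condition $|J|>0$ permits one to pass to the limit in the direction field $\Omega_{\eps_k}=J_{\eps_k}/(|J_{\eps_k}|+\eps_k)$ pointwise a.e.---but they organize the remaining analysis differently. The paper does \emph{not} localize on $\operatorname{supp}\nabla_\omega\phi$; instead it first observes that the full nonlinear integrand $f_{\eps_k}\,\nu(\omega\cdot\Omega_{\eps_k})\,\Omega_{\eps_k}$ is uniformly bounded in $L^{\infty}(D\times(0,T))$, extracts a weak-$*$ limit $F$, and then identifies $F$ with $f\,\nu(\omega\cdot J/|J|)\,J/|J|$ by an Egorov-plus-exhaustion argument on the sets $X_{R,\delta}=\{(t,x,\omega):x\in B_R(0),\ |J(x,t)|>\delta\}$, letting $\eta,\delta\to0$ and $R\to\infty$ at the end. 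Your approach replaces the Egorov/exhaustion machinery by the compact support of the test function together with dominated convergence for the coefficient $g_k$, followed by a strong--strong product estimate; this is more elementary and avoids the weak-$*$ detour entirely. The only point worth tightening is that the convergences in \eqref{J-con} are themselves stated ``up to a subsequence,'' so your subsequence-of-a-subsequence argument should be understood as recovering \eqref{eps1} along the subsequence for which \eqref{J-con} holds---which is exactly what the paper needs.
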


\begin{proof}
By the properties  \eqref{formula-1} of calculus on the sphere applied to the  projection operator $\bbp_{\omega^{\perp}}$ is   the identity operator acting on gradient functions of the sphere $\bbs^{d-1}$, that is \break $\bbp_{\omega^{\perp}}\cdot\nabla_{\omega}\Phi = \nabla_{\omega}\Phi$ holds for any 
test function $\Phi$ of $w\in  \bbs^{d-1}$.
Then,  the limit as $k \to \infty$ in \eqref{eps1} is identical to show   the analog limit for the formulation without the   projection operator. That is, for $k \to \infty$
\begin{equation}\label{eps2}
\int_0^t\int_{D}  f_{\eps_k} \nu \Big(\frac{\omega\cdot J_{\eps_k}}{|J_{\eps_k}|+{\eps_k}}\Big)\frac{J_{\eps_k}}{|J_{\eps_k}| +\eps_k} \cdot\nabla_{\omega} \phi dxd\omega ds
 \rightarrow \int_0^t\int_{D} f \nu \Big(\frac{\omega \cdot J}{|J|}\Big) 
 \frac{J}{|J|} \cdot\nabla_{\omega} \phi dxd\omega ds\, .
\end{equation}

We first control the integrand in \eqref{eps2} using the estimates \eqref{bound-est-1} and boundedness of $\nu$, so that there is a uniform constant $C$ such that
\[
\Big\|f_{\eps_k} \nu \Big(\frac{\omega\cdot J_{\eps_k}}{|J_{\eps_k}|+{\eps_k}}\Big) \frac{J_{\eps_k}}{|J_{\eps_k}| +\eps_k}\Big\|_{L^{\infty}(D\times (0,T))} \le
\|f_{\eps_k}\|_{L^{\infty}(D\times (0,T))} \|\nu\|_{L^{\infty}} \le C,
\]
which implies,  for some $F$, that 
\[
f_{\eps_k} \nu \Big(\frac{\omega\cdot J_{\eps_k}}{|J_{\eps_k}|+{\eps_k}}\Big) \frac{J_{\eps_k}}{|J_{\eps_k}| +\eps_k}\rightharpoonup F\quad \mbox{weakly}-*~\mbox{in}~L^{\infty}(D\times (0,T)). 
\]

Then, it remains to show that
\[
F = f \nu \Big(\frac{\omega\cdot J}{|J|}\Big) \frac{J}{|J|},\quad \mbox{on} ~\{ (t, x,\omega) \in (0,T]\times U\times \bbs^{d-1}~|~ |J(x,t)| >0 \}.
\]
In order to obtain this last identity,  we consider the bounded set
\[
X_{R,\delta} : = \{ (t, x,\omega) \in (0,T]\times B_{R}(0) \times \bbs^{d-1}~|~ |J(x,t)| >\delta \},
\]
where $R$ and $\delta$ are any positive constants, and $B_R(0)$ denote the ball in $U$, with radius $R$, centered at $0$.\\ 

Since $f_{\eps_k} \rightarrow f$ and $J_{\eps_k} \rightarrow J$ a.e. on $X_{R,\delta}$ by \eqref{J-con}, then by   Egorov's theorem, for any $\eta>0$, there exists a $Y_{\eta}\subset X_{R,\delta}$ such that $|X_{R,\delta}\backslash Y_{\eta} | <\eta$ and
\[
f_{\eps_k} \rightarrow f,\quad J_{\eps_k} \rightarrow J \quad\mbox{in}~L^{\infty} (Y_{\eta})\, ,
\] 
and so, for sufficiently large $k$,
\[
|J_{\eps_k}(x,t)| > \frac{\delta}{2}\quad \mbox{for} ~(x,t)\in Y_{\eta}.
\]

Therefore, the $L^\infty(Y_\eta)$ $\eps$-convergence follows from 
\begin{align*}
\begin{aligned}
\Big\|\frac{J_{\eps_k}}{|J_{\eps_k}| +\eps_k} - \frac{J}{|J|} \Big\|_{L^{\infty} (Y_{\eta})} 
&=  \Big\| \frac{|J|(J_{\eps_k}-J) +J(|J|-|J_{\eps_k}|)-\eps_k J }{(|J_{\eps_k}|+{\eps_k})|J| } \Big\|_{L^{\infty} (Y_{\eta})}\\
&\le \frac{2}{\delta}  \Big\| \frac{ |J|(J_{\eps_k}-J) +J(|J|-|J_{\eps_k}|)-\eps_k J}{|J|} \Big\|_{L^{\infty} (Y_{\eta})}\\
&\le \frac{2}{\delta}\Big( \|J_{\eps_k}-J\|_{L^{\infty} (Y_{\eta})} +\||J_{\eps_k}|-|J|\|_{L^{\infty} (Y_{\eta})} -\eps_k \Big) \quad \rightarrow~ 0,
\end{aligned}
\end{align*}
that yiels
\begin{align*}
\begin{aligned}
&\Big\| f_{\eps_k} \nu \Big(\frac{\omega\cdot J_{\eps_k}}{|J_{\eps_k}|+{\eps_k}}\Big) \frac{J_{\eps_k}}{|J_{\eps_k}| +\eps_k}
-f \nu \Big(\frac{\omega\cdot J}{|J|}\Big)\frac{J}{|J|} \Big\|_{L^{\infty} (Y_{\eta})} \\
&\quad = \Big\| f_{\eps_k} \Big[ \nu \Big(\frac{\omega\cdot J_{\eps_k}}{|J_{\eps_k}|+{\eps_k}}\Big)
-\nu \Big(\frac{\omega\cdot J}{|J|}\Big) \Big] \frac{J_{\eps_k}}{|J_{\eps_k}| +\eps_k} \Big\|_{L^{\infty} (Y_{\eta})}\\
&\qquad + \Big\| f_{\eps_k} \nu \Big(\frac{\omega\cdot J}{|J|}\Big)
\Big(\frac{J_{\eps_k}}{|J_{\eps_k}| +\eps_k}-\frac{J}{|J|}\Big) \Big\|_{L^{\infty} (Y_{\eta})} + \Big\|(f_{\eps_k}-f) \nu \Big(\frac{\omega\cdot J_{\eps_k}}{|J_{\eps_k}|+{\eps_k}}\Big) \frac{J_{\eps_k}}{|J_{\eps_k}| +\eps_k}\Big\|_{L^{\infty} (Y_{\eta})}\\
&\quad\le C\|f_{\eps_k}\|_{L^{\infty}} ( \|\nu^{\prime}\|_{L^{\infty}}+ \|\nu\|_{L^{\infty}}   )   
\Big\| \frac{J_{\eps_k}}{|J_{\eps_k}|+{\eps_k}}- \frac{J}{|J|} \Big\|_{L^{\infty}(Y_{\eta})} +C\|f_{\eps_k}-f\|_{L^{\infty}(Y_{\eta})} \|\nu\|_{L^{\infty} } \rightarrow 0.
\end{aligned}
\end{align*}

Hence, the following identity holds 
\[
F = f \nu \Big(\frac{\omega\cdot J}{|J|}\Big) \frac{J}{|J|} \quad \mbox{on}~Y_{\eta}\, ,
\]
and,  since $\eta$, $R$ and $\delta$ are arbitrary, taking $\eta,\delta\to 0$ and $R\to \infty$, it follows
\[
F = f \nu \Big(\frac{\omega\cdot J}{|J|}\Big) \frac{J}{|J|} \quad \mbox{on}~\{ (t, x,\omega) \in (0,T]\times U \times \bbs^{d-1}~|~ |J(x,t)| >0 \}\, .
\]
which completes the proof of Lemma~\ref{lemma4.2}.
\end{proof}

Finally, thanks to  lemma~\ref{lemma4.2} and \eqref{J-con}, it follows that $f$ satisfies the weak formulation \eqref{weak-form}.
In addition,  estimates \eqref{p-est-0} and \eqref{bound-est-0} follow directly from  \eqref{p-est-1} and \eqref{bound-est-1}, respectively.
Therefore, the proof of Theorem~\ref{thm-exist} is now completed.

\medskip

\section{Proof of Uniqueness - Theorem \ref{thm-unique}}
The uniqueness argument is considered in the subclass $\mathcal{A}_{\alpha}$ of weak solutions constructed in Theorem \ref{thm-exist}. 
Let $f$ and $g$ be any weak solutions to the initial value problem \eqref{main} in $\mathcal{A}_{\alpha}$. A straightforward computation yields that 
\begin{align}\label{uni1}
\begin{aligned}
&\frac{1}{2}\frac{d}{dt}\int_{\bbt^d\times\bbs^{d-1}} |f-g|^2 dxd\omega  +\int_{\bbt^d\times\bbs^{d-1}} | \nabla_{\omega} (f-g)|^2 dx d\omega \\
&\quad=-  \int_{\bbt^d\times\bbs^{d-1}} (f-g) \nabla_{\omega}\cdot\Big( (f-g)\nu(\omega\cdot \Omega(f))\bbp_{\omega^{\perp}} \Omega(f)  \Big)dxd\omega \\
&\qquad - \int_{\bbt^d\times\bbs^{d-1}} (f-g)\nabla_{\omega}\cdot\Big( g (\nu(\omega\cdot \Omega(f))-\nu(\omega\cdot \Omega(g))) \bbp_{\omega^{\perp}} \Omega(f) \Big)dxd\omega \\
&\qquad - \int_{\bbt^d\times\bbs^{d-1}} (f-g)\nabla_{\omega}\cdot\Big( g \nu(\omega\cdot \Omega(g))\bbp_{\omega^{\perp}} (\Omega(f)- \Omega(g) ) \Big)dxd\omega\\
&\quad=:{J}_1 + {J}_2+{J}_3.
\end{aligned}
\end{align}
Using the same estimates applied to  $\mathcal{J}_1$ in the proof of Lemma~\ref{lem-compact}, we can also estimate
\begin{align*}
\begin{aligned}
{J}_1 &\le \frac{1}{4} \int_{\bbt^d\times\bbs^{d-1}} | \nabla_{\omega} (f-g)|^2 dx d\omega + C\int_{\bbt^d\times\bbs^{d-1}} |f-g|^2 dxd\omega.
\end{aligned}
\end{align*}
Next,  $J_2$ and $J_3$ can also be estimated same approach  from  \eqref{converge-J}  in Lemma~\ref{lem-compact}, to get
\[
\|{J}(f)-{J}(g)\|_{L^{2}(\bbt^d)}\le C \|K\|_{L^1(\bbt^d)} \|f-g\|_{L^{2}(\bbt^d\times\bbs^{d-1})}.
\]
Moreover, since $|J(f)|\ge\alpha$ in the set $\mathcal{A}_{\alpha}$, then  the difference of alignment forces for any two weak solutions is  controlled by
\begin{align*}
\begin{aligned}
|\Omega(f)-\Omega(g)| &\le \frac{\Big| |J(g)|(J(f)- J(g))- J(g)(|J(f)|-|J(g)|)\Big|}{\alpha |J(g)|}\\
& \le \frac{2}{\alpha}|J(f)-J(g)| \, .
\end{aligned}
\end{align*}
that yields
\[
\|\Omega(f)-\Omega(g)\|_{L^{2}(\bbt^d)}\le C \|f-g\|_{L^{2}(\bbt^d\times\bbs^{d-1})}.
\]
Therefore,  by property \eqref{regularity} for any weak solution, the control of term  $J_2$  in \eqref{uni1}  follows from
\begin{align*}
\begin{aligned}
J_2&= \int_{\bbt^d\times\bbs^{d-1}} \nabla_{\omega}(f-g)\cdot \bbp_{\omega^{\perp}} \Omega(f) g (\nu(\omega\cdot \Omega(f))-\nu(\omega\cdot \Omega(g))) dxd\omega \\
&\le \|g\|_{L^{\infty}}\|\nu^{\prime}\|_{L^{\infty}}\int_{\bbt^d\times\bbs^{d-1}}| \nabla_{\omega}(f-g)|  |\Omega(f)-\Omega(g)| dxd\omega\\
&\le \frac{1}{4} \int_{\bbt^d\times\bbs^{d-1}} | \nabla_{\omega} (f-g)|^2 dx d\omega + C\int_{\bbt^d\times\bbs^{d-1}} |f-g|^2 dxd\omega.
\end{aligned}
\end{align*}
Likewise, the control of the last term  $J_3$  in \eqref{uni1}  follows, since 
\[
J_3\le \frac{1}{4} \int_{\bbt^d\times\bbs^{d-1}} | \nabla_{\omega} (f-g)|^2 dx d\omega + C\int_{\bbt^d\times\bbs^{d-1}} |f-g|^2 dxd\omega.
\]

Hence, gathering the above estimates and using Gronwall's inequality, we have
\[
\int_{\bbt^d\times\bbs^{d-1}} |f-g|^2 dxd\omega \le e^{CT}\int_{\bbt^d\times\bbs^{d-1}} |f_0-g_0|^2 dxd\omega,
\] 
which implies the uniqueness of weak solutions to the initial value problem \eqref{main}  in $\mathcal{A}_{\alpha}$.

\bigskip

\section{Conclusion}
We have shown the existence of global weak solutions to problem \eqref{main} (as well for $J$ defined as in  \eqref{main-0}) in a subclass of solutions with the non-zero local momentum. These solutions are unique on the subclass of solutions in the $d$ dimensional
torus whose mean speed is uniformly bounded below by a strictly positive constant.
 
An important future work would be to remove our assumption on the non-zero local momentum. The main difficulty is due to the lack of momentum conservation for solutions to problem \eqref{main}, and canonical entropy associated to the  equation in \eqref{main}. 
Thus, at this point,  we have neither suitable functional spaces nor distances to study the behavior of solutions whose momentum may vanish locally. This difficulty is related to the issue on stability of solutions to \eqref{main}. 
Another future work is to extend the uniqueness result to the whole spatial domain $\bbr^d$.

\begin{appendix}
\setcounter{equation}{0}
\section{Proof of Lemma \ref{lem-linear}} 
For the notational simplicity, we omit the subindex $n+1$ in \eqref{main-linear}. Our goal is to prove  existence of solutions $f$ to the linear equation
\begin{align}
\begin{aligned} \label{li}
&\partial_t f+ \omega\cdot\nabla_x f = -\nabla_{\omega}\cdot\Big( f \nu ( \omega\cdot \bar{\Omega})\bbp_{\omega^{\perp}}    \bar{\Omega} \Big) + \Delta_{\omega} f,\\
&\bar{\Omega}=\frac{\bar{J}(x,t)}{|\bar{J}(x,t)|+{\eps}},\quad \bar{J}(x,t)= \int_{D} K(|x-y|)\omega g (y,\omega, t) dy d\omega,\\
&f(x,\omega,0) = f_0(x,\omega),
\end{aligned}
\end{align}
where $g$ is just a given integrable function.\\

We begin by rewriting \eqref{li} as
\begin{align}
\begin{aligned} \label{linear-re}
&\partial_t f+ \omega\cdot\nabla_x f + \nu (\omega \cdot \bar{\Omega})\bbp_{\omega^{\perp}} \bar{\Omega} \cdot \nabla_{\omega}f\\
&\qquad + f \nu^{\prime} (\omega \cdot \bar{\Omega}) |\bbp_{\omega^{\perp}} \bar{\Omega}|^2   
-(d-1) f \nu(\omega \cdot \bar{\Omega})\omega\cdot \bar{\Omega}  - \Delta_{\omega} f =0,\\
&f(x,\omega,0) = f_0(x,\omega),
\end{aligned}
\end{align}
where formulas \eqref{formula-2} on projections and calculus on the sphere were used.\\

Next, taking  $\bar{f}(x,\omega,t) := e^{-\lambda t} f(x,\omega, t)$ for a given $\lambda>0$,  It leads to the modified initial value problem
\begin{align}
\begin{aligned} \label{linear-bar}
&\partial_t \bar{f}+ \omega\cdot\nabla_x \bar{f} +\psi_1\cdot \nabla_{\omega}\bar{f} +\Big(\lambda + \psi_2 +\psi_3 \Big) \bar{f} - \Delta_{\omega} \bar{f} =0,\\
&\bar{f}(x,\omega,0) = f_0(x,\omega),
\end{aligned}
\end{align}
where the functions $\psi_1$, $\psi_2$ and $\psi_3$ are given by
\begin{align*}
\begin{aligned} 
&\psi_1(x,\omega,t)=\nu (\omega \cdot \bar{\Omega}) \bbp_{\omega^{\perp}} \bar{\Omega},\\
& \psi_2(x,\omega,t)=\nu^{\prime} (\omega \cdot \bar{\Omega}) |\bbp_{\omega^{\perp}} \bar{\Omega}|^2,\\
&\psi_3(x,\omega,t)=-(d-1) \nu(\omega \cdot \bar{\Omega})\omega\cdot \bar{\Omega}\, , 
\end{aligned}
\end{align*}
respectively. Now, since $|\bar{\Omega}|\le 1$ and the smooth function $\nu$ is bounded, then $\psi_1$, $\psi_2$ and $\psi_3$ are also bounded.
Therefore, by  J. L. Lions' existence theorem in \cite{L},  the existence of a solution for \eqref{linear-bar} follows from the same argument given  by Degond 
 in \cite{D}. That means,  equation \eqref{linear-bar} has a solution $\bar{f}$ in the space
\[
Y:=\{ f\in L^{2}([0,T]\times U; H^1(\bbs^{d-1}))~|~ 
\partial_t f + \omega\cdot\nabla_x f \in L^{2}([0,T]\times U; H^{-1}(\bbs^{d-1}))   \}.
\]
Furthermore, by the Green's formula used  in  \cite{D}, then  the following identity holds,  for any $f \in Y$,
\begin{align}
\begin{aligned} \label{green}
\langle\partial_t f + \omega\cdot\nabla_x f, f\rangle = \frac{1}{2}\int_{D}( |f(x,\omega,T)|^2 - |f(x,\omega,0)|^2 )dxd\omega,
\end{aligned}
\end{align}
where $\langle\cdot,\cdot\rangle$ denotes the pairing of $L^{2}([0,T]\times U; H^{-1}(\bbs^{d-1}))$ and $L^{2}([0,T]\times U; H^{1}(\bbs^{d-1}))$.

This identity  \eqref{green} is needed below to show  uniqueness of solutions $f$ in $Y$ as follows.\\

Let  $\bar{f}\in Y$  be a solution to \eqref{linear-bar} with initial data $f_0=0$. Then,  by   \eqref{green},  it follows
\begin{align}
\begin{aligned}
0&=\langle\partial_t \bar{f}+ \omega\cdot\nabla_x \bar{f} +\psi_1\cdot \nabla_{\omega}\bar{f} +(\lambda + \psi_2 +\psi_3 ) \bar{f} - \Delta_{\omega} \bar{f}, \bar{f}\rangle\\
&=\frac{1}{2}\int_{D}|\bar{f}(x,\omega,T)|^2 dxd\omega -\frac{1}{2}\int_{D} \nabla_{\omega}\cdot\psi_1 |\bar{f}|^2 dxd\omega\\
&\qquad + \int_{D} (\lambda + \psi_2 +\psi_3 )  |\bar{f}|^2 dxd\omega 
+ \int_{D} |\nabla_{\omega}\bar{f}|^2 dxd\omega   \label{unique-1} \\
&\ge \Big(\lambda - \frac{1}{2}\|\nabla_{\omega}\cdot\psi_1\|_{L^{\infty} ([0,T]\times D)}
-\| \psi_2\|_{L^{\infty} ([0,T]\times D)}-\|\psi_3\|_{L^{\infty} ([0,T]\times D)}  \Big) \int_{D} |\bar{f}|^2 dxd\omega. 
\end{aligned}
\end{align} 
Next, since
\begin{align*}
\begin{aligned} 
 \nabla_{\omega}\cdot\psi_1 &= \nu^{\prime} (\omega \cdot \bar{\Omega}) \nabla_{\omega} (\omega \cdot \bar{\Omega}) \cdot \bbp_{\omega^{\perp}} \bar{\Omega} + \nu(\omega \cdot \bar{\Omega}) \nabla_{\omega} \cdot \bbp_{\omega^{\perp}} \bar{\Omega} \\
 &=\nu^{\prime} (\omega \cdot \bar{\Omega}) |\bbp_{\omega^{\perp}} \bar{\Omega}|^2 -(d-1) \nu(\omega \cdot \bar{\Omega})\omega\cdot \bar{\Omega},
\end{aligned}
\end{align*}
the term $ \nabla_{\omega}\cdot\psi_1$ is bounded.  Thus,  choosing $\lambda$ such that 
\beq\label{lambda}
\lambda > \frac{1}{2}\|\nabla_{\omega}\cdot\psi_1\|_{L^{\infty} ([0,T]\times D)}
+\| \psi_2\|_{L^{\infty} ([0,T]\times D)}+\|\psi_3\|_{L^{\infty} ([0,T]\times D)}, 
\eeq
then, estimate \eqref{unique-1} yields $\bar{f}=0$, which proves the uniqueness of the linear equation \eqref{linear-bar}. 
Therefore, \eqref{linear-bar} has a unique solution $\bar{f}\in L^{2}([0,T]\times U; H^1(\bbs^{d-1}))$.\\ 

Furthermore, since $f_0\ge 0$ and $f_0\in L^{\infty}(D)$, by  a similar argument as in \eqref{unique-1}, 
\[
 \bar{f} \ge 0\quad \mbox{and}\quad \bar{f} \in L^{\infty} ([0,T]\times D) .
\]
Indeed, using  the following identity from \cite{D}  on any $f\in Y$, with ${f}_{-}:=\max(-{f} ,0)$,
\[
\langle\partial_t f + \omega\cdot\nabla_x f, {f}_{-}\rangle = \frac{1}{2}\int_{D}(|{f}_{-}(x,\omega,0)|^2- |{f}_{-}(x,\omega,T)|^2  )dxd\omega \, .
\]

Then, since ${f}_{-}(x,\omega,0)=0$ when $f_0\ge 0$, it follows
\begin{align*}
\begin{aligned}
0&=\langle\partial_t \bar{f}+ \omega\cdot\nabla_x \bar{f} +\psi_1\cdot \nabla_{\omega}\bar{f} +(\lambda + \psi_2 +\psi_3 ) \bar{f} - \Delta_{\omega} \bar{f}, \bar{f}_-\rangle\\
&=-\frac{1}{2}\int_{D}|\bar{f}_-(x,\omega,T)|^2 dxd\omega +\frac{1}{2}\int_{D} \nabla_{\omega}\cdot\psi_1 |\bar{f}_-|^2 dxd\omega\\
&\qquad - \int_{D} (\lambda + \psi_2 +\psi_3 )  |\bar{f}_-|^2 dxd\omega 
- \int_{D} |\nabla_{\omega}\bar{f}_-|^2 dxd\omega \\
&\le -\Big(\lambda - \frac{1}{2}\|\nabla_{\omega}\cdot\psi_1\|_{L^{\infty} ([0,T]\times D)}
-\| \psi_2\|_{L^{\infty} ([0,T]\times D)}-\|\psi_3\|_{L^{\infty} ([0,T]\times D)}  \Big) \int_{D} |\bar{f}_-|^2 dxd\omega. 
\end{aligned}
\end{align*}  

Using the same $\lambda$ as in \eqref{lambda}, yields $\bar{f}_-=0$, which proves $\bar{f}\ge 0$.\\
The same argument also deduces that
\[
\|\bar{f}\|_{L^{\infty} ([0,T]\times D)} \le \|f_0\|_{L^{\infty} (D)}.
\]

Finally, using the transformation $f(x,\omega, t) = e^{\lambda t}\bar{f}(x,\omega,t)$, the results hold for solutions of   \eqref{linear-re} as well. 
In addition, since the $\bar{f}$ properties are invariant under such transformation, then the proof of existence is completed, and  estimates \eqref{p-est-2} and \eqref{bound-est-2} follow directly from Lemma \ref{lem-priori} together with boundedness of $\bar{\Omega}$. 
\end{appendix}

\end{document}